\newcommand{\fim}{\hfill\rule{2mm}{2mm}}
\numberwithin{equation}{section}
\newcommand{\beq}{\begin{equation} }
\newcommand{\eqq}{\end{equation} }
\newcommand{\cuad}{{\sqcap\kern-.68em\sqcup}}
\newtheorem{teo}{Theorem}[section]
\newtheorem{lemma}{Lemma}[section]
\newtheorem{corollary}{Corollary}[section]
\newtheorem{remark}{Remark}[section]
\newcommand{\bremark}{\begin{remark} \em}
\newcommand{\eremark}{\end{remark} }
\newtheorem{claim}{Claim}
\def\beeq{\begin{equation}}
\def\eeq{\end{equation}}
\newcommand{\begeqaet}{\begin{eqnarray*}}
\newcommand{\eneqaet}{\end{eqnarray*}}
\let\Section=\section
\def\section{\setcounter{equation}{0}\Section}
\newtheorem{Lem}{Lemma}[section]
\newtheorem{Thm}{Theorem}[section]
\newtheorem{Property}{Property}[section]
\begin{document}
\begin{center}{\bf\Large Existence and multiplicity of solutions for a nonlinear Schr\"odinger equation with non-local regional diffusion  }\medskip

\bigskip

\bigskip

{Claudianor O. Alves}

Universidade Federal de Campina Grande\\
Unidade Acad\^emica de Matem\'atica\\
CEP: 58429-900 - Campina Grande - PB, Brazil\\
{\sl coalves@mat.ufcg.edu.br}

\noindent

{C\'esar E. Torres Ledesma}

Departamento de Matem\'aticas, \\
Universidad Nacional de Trujillo,\\
Av. Juan Pablo II s/n. Trujillo-Per\'u\\
{\sl  ctl\_576@yahoo.es}


\medskip

\medskip
\medskip
\medskip
\medskip

\end{center}

\centerline{\bf Abstract}

\medskip

In this article we are interested in  the following non-linear Schr\"odinger equation with non-local regional diffusion 
$$
(-\Delta)_{\rho_\epsilon}^{\alpha}u + u = f(u)   \mbox{ in } \mathbb{R}^{n}, \quad 
u \in H^{\alpha}(\mathbb{R}^{n})\nonumber, \eqno{(P_\epsilon)}
$$
where $\epsilon >0$, $0< \alpha < 1$, $(-\Delta)_{\rho_\epsilon}^{\alpha}$ is a variational version of the regional laplacian, whose range of scope is a ball with radius $\rho_\epsilon(x)=\rho(\epsilon x)>0$, where $\rho$ is a  continuous function. We give general conditions on $\rho$ and $f$  which assure the existence and multiplicity of solution for $(P_\epsilon)$.

\noindent 
{\bf MSC: } 45G05, 35J60, 35B25

\medskip

\date{}

\setcounter{equation}{0}
\section{ Introduction}

The aim of this article is to study the non-linear  Schr\"odinger equation with non-local regional difussion
$$
(-\Delta)_{\rho_\epsilon}^{\alpha}u + u = f(u) \quad \mbox{in}\quad \mathbb{R}^{n},\quad u \in H^{\alpha}(\mathbb{R}^{n}), \eqno{(P_\epsilon)}
$$
where $\epsilon>0$, $0< \alpha <1$, $n\geq 2$ and $f:\mathbb{R} \to \mathbb{R}$ is a $C^1$ function. The operator $(-\Delta)_{\rho_\epsilon}^{\alpha}$ is a variational version of the non-local regional laplacian, with range of scope determined by function $\rho_\epsilon(x)=\rho(\epsilon x)$, where $\rho \in C(\mathbb{R}^{n},(0,+\infty))$. 

As pointed out in \cite{PFCT1}, when studying the singularly perturbed equation (see equation (\ref{Eq04-}) below), the scope function $\rho$, that describes the size of the ball of the influential region of the non-local operator, plays a key role in deciding the concentration point of ground states of the equation. Even though, at a first sight, the minimum point of $\rho$ seems to be the concentration point, there is a non-local effect that needs to be taken in account.

Recently, a great attention has been focused on the study of problems involving the fractional Laplacian, from a pure mathematical point of view as well as from concrete applications, since this operator naturally arises in many different contexts, such as obstacle problems, financial mathematics, phase transitions, anomalous diffusions, crystal dislocations, soft thin films, semipermeable membranes, flame propagations, conservation laws, ultra relativistic limits of quantum mechanics, quasi-geostrophic flows, minimal surfaces, materials science and water waves. The literature is too wide to attempt a reasonable list of references here, so we derive the reader to the work by Di Nezza, Patalluci and Valdinoci \cite{EDNGPEV}, where a more extensive bibliography and an introduction to the subject are given.

In the context of fractional quantum mechanics, non-linear fractional Schr\"odinger equation has been proposed by   Laskin \cite{NL-1}, \cite{NL-2} as a result of expanding the Feynman path integral, from the Brownian-like to the L\'evy-like quantum mechanical paths. In the last 10 years, there has been a lot of interest in the study of the fractional Schr\"odinger equation, see the works in \cite{MC}, \cite{JDMX}, \cite{PFAQJT}, \cite{XGMX} and \cite{EOFCJV}.
In a recent paper Felmer, Quaas and Tan  \cite{PFAQJT} considered positive solutions of nonlinear fractional Schr\"odinger equation
\begin{equation}\label{Eq02-}
(-\Delta)^{\alpha}u + u = f(x,u)\;\;\mbox{in}\;\; \mathbb{R}^{n}.
\end{equation}
They obtained the existence of a ground state by mountain pass argument and a comparison method devised by Rabinowitz in \cite{PR1} for $\alpha = 1$. They  analyzed regularity, decay and symmetry properties of these solutions. At this point it is worth mentioning that the uniqueness of the ground state of $(-\Delta)^\alpha u + u= u^{p+1}$ in $\mathbb{R}$ for general $\alpha \in (0,1)$, where $0<p<4\alpha/(1-2\alpha)$ for $\alpha \in (0,\frac{1}{2})$ and $0< p <\infty$ for $\alpha \in [\frac{1}{2}, 1)$, was proved by Frank and Lenzmann in \cite{RFEL}. Recently, the result of \cite{RFEL} has been extended in any dimension when $\alpha$ is sufficiently close to $1$ by Fall and Valdinoci in \cite{MFEV} and later for general $\alpha \in (0,1)$ by Frank, Lenzmann and Silvestre in \cite{RFELLS}.
We also mention the work by Cheng \cite{MC}, where  the fractional Schr\"odinger equation  
\begin{equation}\label{Eq03-}
(-\Delta)^{\alpha}u + V(x)u = u^{p}\;\;\mbox{in}\;\; \mathbb{R}^{n}
\end{equation}
with unbounded potential $V$ was studied. The existence of a ground state of (\ref{Eq03-}) is obtained by Lagrange multiplier method and the Nehari manifold method is used  to obtain standing waves with prescribed frequency.

On the other hand, research has been done in recent years regarding regional fractional laplacian, where the scope of the operator is restricted to a variable region near each point. We mention the work by Guan \cite{guan1} and Guan and Ma \cite{guan2} where they study these operators, their relation with stochastic processes and they develop integration by parts formula, and the work by Ishii and Nakamura \cite{HIGN}, where the authors studied the Dirichlet problem for regional fractional Laplacian modeled on the $p$-Laplacian. 

Very recently Felmer and Torres \cite{PFCT, PFCT1}, considered positive solutions of nonlinear Schr\"odinger equation with non-local regional diffusion 
\begin{equation}\label{Eq04-}
\epsilon^{2\alpha}(-\Delta)_{\rho}^{\alpha}u + u = f(u)\;\;\mbox{in}\;\;\mathbb{R}^n,\;\;u\in H^{\alpha}(\mathbb{R}^n).
\end{equation}  
The operator $(-\Delta)_{\rho}^{\alpha}$ is a variational version of the non-local regional Laplacian, defined by
$$
\int_{\mathbb{R}^n}(-\Delta)_{\rho}^{\alpha}uvdx = \int_{\mathbb{R}^n}\int_{B(0,\rho (x))} \frac{[u(x+z) - u(x)][v(x+z)-v(x)]}{|z|^{n+2\alpha}}dzdx.
$$ 
Under suitable assumptions on the nonlinearity $f$ and the range of scope $\rho$, they obtained the existence of a ground state by mountain pass argument and a comparison method devised by Rabinowitz in \cite{PR1} for $\alpha = 1$. Furthermore, they analyzed symmetry properties and concentration phenomena of these solutions. These regional operators present various interesting characteristics that make them very attractive from the point of view of mathematical theory of non-local operators.  

Furthermore, in a recent paper \cite{YPJLCT}, Pu, Liu and Tang have considered  the problem
\begin{equation}\label{Eq05-}
(-\Delta)_{\rho}^{\alpha}u + V(x)u = f(u,x) \quad \mbox{in}\quad \mathbb{R}^{n},\quad u \in H^{\alpha}(\mathbb{R}^{n}),
\end{equation}
by assuming that $\rho$ and $V$ are bounded from below and there exist $r_0>0$ such that for any $M>0$,
$$
\lim_{|y|\to \infty}meas(\{x\in \mathbb{R}^n:\;\;|x-y|\leq r_0,\;\;V(x)\leq M\}) = 0,
$$
and the nonlinearity $f(x,u)$ satisfy suitable condition, they have proved the existence of a nonnegative ground state solution for $(\ref{Eq05-})$. Moreover, we should mention that the Dirichlet boundary value problem on a bounded domain with regional diffusion were investigated by the second author in \cite{CT}. 

Motivated by these previous results, in this paper we intend to consider new class of functions $\rho$, more precisely we will consider the following classes: 

\vspace{0.5 cm}

\noindent {\bf Class 1: $\rho$ is periodic}
\begin{enumerate}
\item[($\rho_1$)] $\rho \in C(\mathbb{R}^n,(0,+\infty))$ and  
$$
0<\rho_0=\displaystyle \inf_{x \in \mathbb{R}^n}\rho(x).
$$ 
\item[($\rho_2$)] $\rho (x+T) = \rho (x)$, $x\in \mathbb{R}^n$, $T\in \mathbb{Z}^n$.
\end{enumerate}

\noindent {\bf Class 2:  $\rho$ is asymptotically periodic}
\begin{enumerate}
\item[$(\rho_3)$] There is a continuous periodic function $h_\infty:\mathbb{R}^n \to \mathbb{R}$ such that
$$
0<\rho_0=\inf_{x \in \mathbb{R}^n}\rho(x) \leq \rho(x) \leq h_\infty(x) \quad \forall x \in \mathbb{R}^n. 
$$ 
\item[$(\rho_4)$] 
$$
|h_\infty(x)-\rho(x)|\to 0 \quad \mbox{as} \quad |x| \to +\infty .
$$ 
\end{enumerate}

\noindent {\bf Class 3: $\rho$ has finite global minimum points} \\

The function $\rho$ verifies $(\rho_1)$ and 
\begin{enumerate}
	\item[($\rho_5$)] 
	$$
\rho_\infty=	\lim_{|x| \to +\infty}\rho(x)> \rho(x), \quad \forall x \in \mathbb{R}^n. 
	$$
\item[$(\rho_6)$]  There are only $l$ points $a_1,a_2,\cdots,a_l \in \mathbb{R}^n$ such that 
$$
\rho(a_i)=\displaystyle \inf_{x \in \mathbb{R}^n}\rho(x), \quad \forall i \in \{1,\cdots,l\}.
$$
\end{enumerate}
Without lost of generality, we will assume that 
$$
\displaystyle \inf_{x \in \mathbb{R}^n}\rho(x)=1 \quad \mbox{and} \quad a_1=0.
$$

Associated with the function $f$, we assume the following conditions:  
\begin{enumerate}
\item[($f_1$)] $f\in C^1(\mathbb{R}, \mathbb{R})$ and 
$$
\lim_{|t|\to \infty} \frac{f(t)}{|t|^{q-1}} = 0,\;\;\lim_{|t|\to \infty} \frac{f(t)t}{|t|^2} = +\infty
$$
for some $q\in (2,2_{\alpha}^{*})$, where $2_{\alpha}^{*}=\frac{2n}{n-2\alpha}$.
\item[($f_2$)] $f(t) = o(|t|)$, as $|t| \to 0$.
\item[($f_3$)] There exists $\theta \geq 1$ such that $\theta \mathcal{F}(t) \geq \mathcal{F}(\sigma t)$ for $t\in \mathbb{R}$ and $\sigma \in [0,1]$, where
$$
\mathcal{F}(t) = f(t)t - 2F(t),\;\;\mbox{where} \;\;F(t) = \int_{0}^{t}f(s)ds.
$$
\end{enumerate}

Now we are in a position to state our main existence theorem.
\begin{Thm}\label{main} 
Assume $0 < \alpha < 1$,  $n \geq 2$ and $(f_1)-(f_3)$. If $\epsilon=1$ and  \\
\noindent i) \,\, $\rho$ belongs to Class 1\\ 
or \\
\noindent ii) \,\, $\rho$ belongs to Class 2 and $f$ also satisfies 
$$
\frac{f(t)}{|t|} \quad \mbox{is strictly increasing in} \quad  t, \leqno{(f_4)} 
$$
then problem $(P_\epsilon)$ possesses a non-trivial weak solution.  Moreover, if $\rho$ belongs to Class 3 and $f$ satisfies $(f_1),(f_2),(f_4)$ and 
\begin{enumerate}
\item[($f_3'$)] There exists $\theta >2 $ such that
$$
0<\theta F(t)\leq f(t)t \;\;\mbox{where} \;\;F(t) = \int_{0}^{t}f(s)ds,
$$
\end{enumerate}
then there is $\epsilon_*>0$, such that problem $(P_\epsilon)$ has at least $l$ non-trivial weak solutions for $\epsilon \in (0, \epsilon_*)$.  
\end{Thm}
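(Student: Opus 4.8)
The plan is to treat all three cases variationally, working with the energy functional
$$I_\epsilon(u) = \frac{1}{2}\|u\|_\epsilon^2 - \int_{\mathbb{R}^n} F(u)\,dx, \qquad \|u\|_\epsilon^2 = \int_{\mathbb{R}^n}\int_{B(0,\rho_\epsilon(x))} \frac{|u(x+z)-u(x)|^2}{|z|^{n+2\alpha}}\,dz\,dx + \int_{\mathbb{R}^n} u^2\,dx,$$
which is of class $C^1$ on $H^\alpha(\mathbb{R}^n)$ and whose critical points are exactly the weak solutions of $(P_\epsilon)$. Conditions $(f_1)$ and $(f_2)$ yield the mountain pass geometry (a strict local minimum at the origin and a point of negative energy far out), so in every case there is a mountain pass level $c_\epsilon>0$ with an associated Palais--Smale sequence. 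The entire difficulty is the loss of compactness caused by the unbounded domain $\mathbb{R}^n$, and the three cases differ only in how this compactness is recovered.

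For Case i) ($\rho$ in Class 1), I would first use $(f_3)$ to show that Palais--Smale sequences are bounded: the auxiliary function $\mathcal{F}(t)=f(t)t-2F(t)$ controls $I_1(u)-\tfrac{1}{2}I_1'(u)u$, and the monotonicity assumed in $(f_3)$ prevents the norm from blowing up even in the absence of the Ambrosetti--Rabinowitz condition. Boundedness produces a weak limit which is automatically a critical point since $I_1$ is $C^1$. To guarantee that this limit is nontrivial I would exploit the $\mathbb{Z}^n$-periodicity from $(\rho_2)$: by a Lions-type vanishing lemma, either the sequence vanishes, forcing $\int F(u_k)\to 0$ and collapsing the level to $0$ (a contradiction), or there exist translations $\tau_k\in\mathbb{Z}^n$ for which $u_k(\cdot+\tau_k)$ has a nonzero weak limit; periodicity of $\rho$ makes $I_1$ invariant under these integer translations, so the translated limit is a nontrivial weak solution.

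For Case ii) ($\rho$ in Class 2), the extra hypothesis $(f_4)$ is what makes the Nehari manifold $\mathcal{N}_1$ a genuine $C^1$ constraint meeting each ray exactly once, so that $c_1=\inf_{\mathcal{N}_1} I_1$. I would introduce the limiting periodic functional $I_\infty$ built from $h_\infty$, with its own ground-state level $c_\infty$, and prove the strict inequality $c_1<c_\infty$. Since $(\rho_3)$ gives $\rho\le h_\infty$, the quadratic form under $\rho$ never exceeds that under $h_\infty$, hence $c_1\le c_\infty$, while strictness follows because $\rho<h_\infty$ on a set of positive measure. A splitting argument then shows that any Palais--Smale sequence at level $c_1$ either converges strongly or sheds a bubble carrying at least $c_\infty$ of energy; the latter is excluded below $c_\infty$, so $(f_4)$ and $(\rho_3)$--$(\rho_4)$ together restore compactness and the weak limit solves $(P_1)$.

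For the multiplicity statement (Class 3 with $(f_3')$), I would combine Lusternik--Schnirelmann category with a barycenter (photography) argument. Now $(f_3')$ is the Ambrosetti--Rabinowitz condition, which restores boundedness of Palais--Smale sequences, while $(f_4)$ again gives a well-behaved $\mathcal{N}_\epsilon$. Let $c_0$ be the ground-state level of the autonomous problem with $\rho\equiv 1$ and $c_\infty$ that with $\rho\equiv\rho_\infty$; by $(\rho_5)$ and monotonicity of the level in $\rho$ one has $c_0<c_\infty$. The two key estimates are (a) $c_\epsilon\to c_0$ as $\epsilon\to 0$, obtained by testing with rescaled autonomous ground states concentrated near a minimum $a_i$, and (b) compactness of Palais--Smale sequences below $c_\infty$ up to translation. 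Using concentrated test functions centered at the points $a_1,\dots,a_l$ I would build a map $\Phi_\epsilon$ from a neighborhood of $\{a_1,\dots,a_l\}$ into a low sublevel set of $I_\epsilon|_{\mathcal{N}_\epsilon}$ together with a barycenter map $\beta_\epsilon$ back, such that $\beta_\epsilon\circ\Phi_\epsilon$ is homotopic to the inclusion; this forces $\mathrm{cat}(I_\epsilon^{c}\cap\mathcal{N}_\epsilon)\ge l$ for small $\epsilon$, and the Lusternik--Schnirelmann theorem yields at least $l$ critical points. The main obstacle throughout is compactness: in Case i) it is defeated by translation invariance, in Case ii) by the strict level gap $c_1<c_\infty$, and in the multiplicity case the hardest point is the two-sided topological estimate pinning the low sublevel sets of $I_\epsilon|_{\mathcal{N}_\epsilon}$ to the minima set, which requires careful control of the concentration behavior of the regional operator as $\epsilon\to 0$.
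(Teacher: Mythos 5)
Your Case i) and your general variational framework coincide with the paper's argument (mountain pass geometry, boundedness of $(PS)$ sequences via $(f_3)$ through a Jeanjean-type comparison with $\max_{t\in[0,1]}I(tu_k)$, and recovery of compactness by $\mathbb{Z}^n$-translations plus the Lions-type vanishing lemma). The genuine gap is in Case ii): you assert the strict inequality $c_1<c_\infty$ ``because $\rho<h_\infty$ on a set of positive measure,'' but that is not among the hypotheses. Condition $(\rho_3)$ only gives $\rho\le h_\infty$; nothing excludes $\rho\equiv h_\infty$, or equality except on a null set, so the strict level gap cannot be derived, and your splitting argument --- which needs $c_1<c_\infty$ to exclude the escaping bubble --- breaks down precisely in the borderline case. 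The paper resolves this by a dichotomy rather than by proving strictness: if $c=c_\infty$ it takes the ground state $w_\infty$ of the periodic limit functional $I_\infty$, uses $\rho\le h_\infty$ to get $I'(w_\infty)w_\infty\le 0$, projects $w_\infty$ onto the Nehari manifold $\mathcal{N}$ of $I$ as $tw_\infty$ with $t\in(0,1]$, and shows via $(f_4)$ (monotonicity of $\mathcal{F}(t)=f(t)t-2F(t)$ along rays) that $I(tw_\infty)\le I_\infty(w_\infty)=c_\infty=c$, so $tw_\infty$ minimizes $I$ on $\mathcal{N}$ and is a critical point; only in the complementary case $c<c_\infty$ does it run the vanishing/escaping-mass argument you describe. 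You need to add this case analysis (or an extra hypothesis forcing strictness) for Case ii) to be complete.

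For Class 3 your route is genuinely different from the paper's but is a recognized alternative. You propose Lusternik--Schnirelmann category with a photography/barycenter pair; the paper avoids the category machinery altogether. It defines $Q_\epsilon(u)=\int\chi(\epsilon x)|u|^2dx/\int|u|^2dx$, shows (Lemma \ref{lemK}) that elements of $\mathcal{N}_\epsilon$ with energy below $c_\infty+\delta_0$ have $Q_\epsilon$ near the minima set, and then minimizes $J_\epsilon$ separately on each piece $\theta_\epsilon^i=\{u\in\mathcal{N}_\epsilon:|Q_\epsilon(u)-a_i|<\gamma_0\}$, using $\beta_\epsilon^i<\tilde\beta_\epsilon^i$ together with Ekeland's principle to produce one $(PS)_{\beta_\epsilon^i}$ sequence per minimum, and the $(PS)_d$ condition for $d\le c_\infty+\tau$ with $\tau=\frac12(c_{\rho_\infty}-c_\infty)$ to make each converge; distinctness is immediate since the balls $\overline{B_{\gamma_0}(a_i)}$ are disjoint. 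Because the $l$ pieces are pairwise disjoint, this local-minimization argument is more elementary and yields the same count $l$; your category approach would also work (here $\mathrm{cat}(K_{\gamma_0/2})=l$), but it buys nothing extra and obliges you to verify in addition that $\mathcal{N}_\epsilon$ is a $C^1$ manifold on which the deformation lemma applies. Your two key estimates (convergence of $c_\epsilon$ to the autonomous level for $\rho\equiv 1$, and compactness strictly below the level for $\rho\equiv\rho_\infty$) are exactly the paper's Lemmas \ref{c0<cf00}--\ref{rho}, modulo notation.
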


Before concluding this introduction, we would like point out that in the proof of Theorem \ref{main} we adapt some ideas explored in  Alves, Carri\~ao \& Miyagaki \cite{ACM}, Cao \& Noussair \cite{CN}, Cao \& Zhou \cite{Cao},  Hsu, Lin \& Hu \cite{Hsu1}, Lin \cite{Lin12} and Hu \& Tang \cite{HuTang}.  In the above papers the authors have studied the existence and multiplicity of solution for problems involving the Laplacian operator.

The plan of the paper is as follows: In Section 2, we review some properties of the function space which will be used. In Section 3, we prove some technical lemmas in while in Section 4 we prove the main result. Finally, in Section 5 we write a remark about the existence of ground state solution.

\section{Preliminaries}

The fractional Sobolev space of order $\alpha$ on $\mathbb{R}^n$ is defined by 
$$
H^{\alpha}(\mathbb{R}^n) = \left\{ u\in L^{2}(\mathbb{R}^n):\;\;\int_{\mathbb{R}^n}\int_{\mathbb{R}^n} \frac{|u(x)-u(z)|^2}{|x-z|^{n+2\alpha}}dzdx<\infty \right\},
$$
endowed with the norm
$$
\|u\|_{\alpha} = \left( \int_{\mathbb{R}^n} |u(x)|^2 dx + \int_{\mathbb{R}^n} \int_{\mathbb{R}^n} \frac{|u(x) - u(z)|^2}{|x-z|^{n+2\alpha}}dzdx\right)^{1/2}.
$$
Given a function $\rho$ as above, we define
\begin{equation}\label{2Eq01}
\|u\|^{2} = \int_{\mathbb{R}^n}\int_{B(0,\rho (x))} \frac{|u(x+z)-u(x)|^2}{|z|^{n+2\alpha}}dzdx + \int_{\mathbb{R}^n} |u(x)|^2dx.
\end{equation}
and the space
$$
H_{\rho}^{\alpha}(\mathbb{R}^n) = \{u\in L^2(\mathbb{R}^n):\;\;\|u\|^{2}<\infty\}.
$$
We note that, if $\rho$ satisfies ($\rho_1$), there exists a constant $\tilde{C}>0$ such that
$$
\|u\|_{\alpha} \leq \tilde{C}\|u\|.
$$
This inequality implies that $H_{\rho}^{\alpha}(\mathbb{R}^n) \hookrightarrow L^q(\mathbb{R}^n)$ is continuous for any $q\in [2, 2_{\alpha}^{*}]$ and $H_{\rho}^{\alpha}(\mathbb{R}^n) \hookrightarrow L_{loc}^{q}(\mathbb{R}^n)$ is compact for any $q\in [2, 2_{\alpha}^{*})$ (for more details, see \cite{PFCT1}). From the above remark, we ensure that
$$ 
H^{\alpha}(\mathbb{R}^n)=H_{\rho}^{\alpha}(\mathbb{R}^n)=H_{\rho_\epsilon}^{\alpha}(\mathbb{R}^n)=H_{h_\infty}^{\alpha}(\mathbb{R}^n).
$$
Moreover, the norms $\|\,\,\,\|_\alpha, \, \|\,\,\,\|$ and
$$
\|u\|^{2}_\infty=\int_{\mathbb{R}^n}\int_{B(0,h_\infty (x))} \frac{|u(x+z)-u(x)|^2}{|z|^{n+2\alpha}}dzdx + \int_{\mathbb{R}^n} |u(x)|^2dx.
$$
are equivalents on $H^{\alpha}(\mathbb{R}^n)$.

We would like point out that if $\rho$ is a $\mathbb{Z}^n$-periodic function and $y \in \mathbb{Z}^{n}$, a simple change variable gives
$$
\int_{\mathbb{R}^n}\int_{B(0,\rho(x+y))} \frac{|u(x+z)-u(x)|^2}{|z|^{n+2\alpha}}dzdx = \int_{\mathbb{R}^n}\int_{B(0,\rho (x))} \frac{|u(x+z)-u(x)|^2}{|z|^{n+2\alpha}}dzdx.
$$ 
The above equality will be used frequently in our paper.

The following lemma is a version of the concentration compactness principle proved by Felmer and Torres 
\cite{PFCT1}.
\begin{Lem}\label{lemCC}
Let $n\geq 2$. Assume that $\{u_k\}$ is bounded in $H_{\rho}^{\alpha}(\mathbb{R}^n)$ with
$$
\lim_{k\to \infty} \sup_{y\in \mathbb{R}^n} \int_{B(y,R)} |u_k(x)|^2dx = 0,
$$
for some $R>0$. Then $u_k \to 0$ in $L^q(\mathbb{R}^n)$ for $q\in (2,2_{\alpha}^{*})$.
\end{Lem}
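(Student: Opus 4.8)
The statement is the fractional analogue of P.-L. Lions' vanishing lemma, so the plan is to follow the classical covering-plus-interpolation scheme, transplanted to $H^\alpha(\mathbb{R}^n)$. The first move is to discard the $\rho$-dependent norm: since $\{u_k\}$ is bounded in $H_\rho^\alpha(\mathbb{R}^n)$ and the norms $\|\cdot\|$ and $\|\cdot\|_\alpha$ are equivalent under $(\rho_1)$, the sequence is bounded in the standard space $H^\alpha(\mathbb{R}^n)$, say $\|u_k\|_\alpha \le M$ for all $k$, and from here I would work entirely with the ordinary Gagliardo norm. I would also reduce the target to a single intermediate exponent: it suffices to prove $u_k \to 0$ in $L^{q_0}(\mathbb{R}^n)$ for one fixed $q_0 \in (2,2_\alpha^*)$, because every other $q \in (2,2_\alpha^*)$ then follows from the global interpolation inequality $\|u_k\|_{L^q} \le \|u_k\|_{L^{p_1}}^{1-\mu}\|u_k\|_{L^{p_2}}^{\mu}$ with endpoints $\{p_1,p_2\}=\{2,q_0\}$ (when $q\le q_0$) or $\{q_0,2_\alpha^*\}$ (when $q\ge q_0$), using that $\{u_k\}$ is bounded in $L^2$ and, by the embedding $H^\alpha(\mathbb{R}^n)\hookrightarrow L^{2_\alpha^*}(\mathbb{R}^n)$ recorded in Section 2, also in $L^{2_\alpha^*}$.

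To treat the single exponent $q_0$, I would cover $\mathbb{R}^n$ by balls $\{B(y_i,R)\}_i$ of the radius $R$ furnished by the hypothesis, arranged so that every point lies in at most $N=N(n)$ of them (a lattice covering has this finite-overlap property, with $N$ depending only on the dimension). On a single ball $B=B(y_i,R)$ I would combine the interpolation inequality $\|u\|_{L^{q_0}(B)} \le \|u\|_{L^2(B)}^{1-\theta}\|u\|_{L^{2_\alpha^*}(B)}^{\theta}$, where $\frac{1}{q_0}=\frac{1-\theta}{2}+\frac{\theta}{2_\alpha^*}$, with the localized fractional Sobolev embedding $\|u\|_{L^{2_\alpha^*}(B)}\le C\,\|u\|_{H^\alpha(B)}$ on the Lipschitz domain $B$, whose constant $C$ is independent of $i$ by translation invariance of the seminorm. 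This yields the per-ball bound
\[
\int_{B(y_i,R)}|u|^{q_0}\,dx \le C\,\|u\|_{L^2(B(y_i,R))}^{(1-\theta)q_0}\,\|u\|_{H^\alpha(B(y_i,R))}^{\theta q_0}.
\]
The key is to fix $q_0$ as the unique exponent in $(2,2_\alpha^*)$ for which $\theta q_0=2$; a short computation gives $q_0=4\bigl(1-\tfrac{1}{2_\alpha^*}\bigr)$, which indeed lies in $(2,2_\alpha^*)$, and then $(1-\theta)q_0=q_0-2>0$.

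With this choice the summation is immediate. Summing the per-ball inequality over $i$ and bounding each local $L^2$-factor by the vanishing supremum gives
\[
\int_{\mathbb{R}^n}|u_k|^{q_0}\,dx \le C\Big(\sup_{y\in\mathbb{R}^n}\int_{B(y,R)}|u_k|^2\,dx\Big)^{\frac{q_0-2}{2}}\sum_i \|u_k\|_{H^\alpha(B(y_i,R))}^2 .
\]
The finite-overlap property controls the last sum: since $\sum_i \mathbf 1_{B(y_i,R)}(x)\,\mathbf 1_{B(y_i,R)}(z)\le N$ pointwise, both the $L^2$ part and the Gagliardo seminorm part add up to at most $N\|u_k\|_\alpha^2\le N M^2$. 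As the supremum tends to $0$ and $q_0>2$, the right-hand side tends to $0$, so $u_k\to 0$ in $L^{q_0}(\mathbb{R}^n)$, and the global interpolation of the first paragraph disposes of all remaining $q\in(2,2_\alpha^*)$.

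The part requiring the most care — and the only place where the nonlocal nature of the operator genuinely enters — is the localized summation of the Gagliardo seminorm: one must use the truly localized Sobolev inequality on each ball, in which the seminorm is the double integral over $B\times B$ alone, and then verify that these truncated seminorms add up to at most $N$ times the full seminorm of $u_k$ on $\mathbb{R}^n$. This is precisely what the pointwise overlap bound on $\mathbf 1_{B_i}(x)\,\mathbf 1_{B_i}(z)$ provides, and it is what makes the fractional argument run in parallel with the classical $H^1$ one; everything else is interpolation together with the embeddings already established in Section 2.
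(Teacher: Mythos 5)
Your proof is correct. Note that this paper does not prove Lemma \ref{lemCC} at all --- it is quoted from Felmer--Torres \cite{PFCT1} --- and your covering-plus-interpolation argument (with the exponent $q_0 = 4\bigl(1-\tfrac{1}{2_\alpha^*}\bigr) = 2+\tfrac{4\alpha}{n}$ chosen so that the local $H^\alpha$ factor enters with power exactly $2$, and the finite-overlap bound $\sum_i \mathbf{1}_{B_i}(x)\mathbf{1}_{B_i}(z)\le N$ handling the localized Gagliardo seminorms) is precisely the standard Lions/Willem vanishing argument used in that reference, so it matches the intended proof.
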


Associated with $(P_\epsilon)$ we have the functional $I: H^{\alpha}_{ \rho_{\epsilon}}(\mathbb{R}^n) \to \mathbb{R}$ defined by
\begin{equation}\label{Eq05}
I(u) = \frac{1}{2}\left( \int_{\mathbb{R}^n}\int_{B(0, \rho_{\epsilon} (x))} \frac{|u(x+z) - u(x)|^2}{|z|^{n+2\alpha}}dzdx + \int_{\mathbb{R}^n}|u(x)|^2dx \right) - \int_{\mathbb{R}^n}F(u(x))dx.
\end{equation}
From $(f_1)$, $I \in C^{1}(H^{\alpha}_{ \rho_{\epsilon}}(\mathbb{R}^n), \mathbb{R})$  with its Fr\'echet derivative given by
$$
\begin{aligned}
I'(u)v &= \int_{\mathbb{R}^n}\int_{B(0,\rho_{\epsilon})}\frac{[u(x+z)-u(x)][v(x+z)-v(x)]}{|z|^{n+2\alpha}} + \int_{\mathbb{R}^n}u(x)v(x)dx\\
& - \int_{\mathbb{R}^n}f(u(x))v(x)dx,
\end{aligned}
$$
for $u,v\in H^\alpha_{ \rho_{\epsilon}}(\mathbb{R}^n)$. Therefore, the critical points of $I$ are weak solutions of $(P_\epsilon)$.

\section{Technical lemmas}

In this section, we are going to prove some technical results, for that purpose we take borrow some ideas of \cite{PFCT1} and \cite{SL}. First all, we would like point out the following properties involving the function $f$:

\begin{Property}\label{nta1}
\begin{enumerate}
\item By condition ($f_1$) and ($f_2$), for any $\tau >0$ there exists a constant $C_\tau >0$ such that
\begin{equation}\label{Eq02}
|F(t)| \leq \tau |t|^2 + C_{\tau}|t|^q.
\end{equation}
\item By  ($f_3$), we deduce that
$$
\mathcal{F}(t) = f(t)t - 2F(t)\geq 0,\;\; \mbox{for all}\;\; t\in \mathbb{R}.
$$
Furthermore, if $t>0$ then we have
\begin{equation}\label{Eq03}
\frac{\partial}{\partial t} \left( \frac{F(t)}{{t^2}} \right) = \frac{tf(t) - 2F(t)}{{ t^3}}\geq 0.
\end{equation}
By ($f_2$),
\begin{equation}\label{Eq04}
\lim_{t\to 0^+} \frac{F(t)}{t^2} = 0.
\end{equation}
Next, from (\ref{Eq03}) and (\ref{Eq04}), we conclude that $F(t)\geq 0$ for all $t\in \mathbb{R}$. 
\end{enumerate}
\end{Property}

Using the above properties we are ready to prove our technical results.

\begin{Lem}\label{mountain}
The functional $I$ satisfies the mountain pass geometry.  
\end{Lem}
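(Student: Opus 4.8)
The plan is to verify the two standard geometric conditions for the functional $I$ defined in (\ref{Eq05}): first, that the origin is a strict local minimum with a uniform positive lower bound on a small sphere, and second, that $I$ takes values below that bound far from the origin along a fixed ray. For the first condition I would start from the growth estimate (\ref{Eq02}) in Property \ref{nta1}, namely $|F(t)| \leq \tau |t|^2 + C_\tau |t|^q$ for any $\tau > 0$. Integrating this over $\mathbb{R}^n$ and applying the continuous embedding $H^\alpha_{\rho_\epsilon}(\mathbb{R}^n) \hookrightarrow L^q(\mathbb{R}^n)$ (valid for $q \in [2, 2_\alpha^*]$, as recorded in the Preliminaries), I obtain
$$
\int_{\mathbb{R}^n} F(u)\,dx \leq \tau \|u\|_{L^2}^2 + C_\tau \|u\|_{L^q}^q \leq \tau C_1 \|u\|^2 + C_\tau C_2 \|u\|^q,
$$
so that $I(u) \geq \tfrac{1}{2}\|u\|^2 - \tau C_1 \|u\|^2 - C_\tau C_2 \|u\|^q$. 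Choosing $\tau$ small enough that $\tfrac{1}{2} - \tau C_1 \geq \tfrac{1}{4}$ gives $I(u) \geq \tfrac{1}{4}\|u\|^2 - C_\tau C_2 \|u\|^q$. Since $q > 2$, the function $r \mapsto \tfrac{1}{4}r^2 - C_\tau C_2 r^q$ is strictly positive for small $r > 0$, so there exist $r, \beta > 0$ with $I(u) \geq \beta$ whenever $\|u\| = r$.

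For the second condition I would fix any $u_0 \in H^\alpha_{\rho_\epsilon}(\mathbb{R}^n)$ with $u_0 \neq 0$ and examine $I(t u_0)$ as $t \to +\infty$. The quadratic part of $I$ grows like $\tfrac{1}{2} t^2 \|u_0\|^2$, so the decisive point is that the nonlinear term $\int_{\mathbb{R}^n} F(t u_0)\,dx$ dominates it. Here I would invoke the superlinearity built into $(f_1)$, specifically $\lim_{|t|\to\infty} f(t)t/|t|^2 = +\infty$, which forces $F(s)/s^2 \to +\infty$ as $|s| \to \infty$. A convenient way to make this rigorous is to show that for every $M > 0$ there is a constant $C_M$ with $F(s) \geq M s^2 - C_M$, and hence
$$
I(t u_0) \leq \frac{t^2}{2}\|u_0\|^2 - M t^2 \int_{\{u_0 \neq 0\}} |u_0|^2\,dx + C_M |\mathrm{supp}\, u_0|;
$$
taking $u_0$ with compact support and $M$ large makes the coefficient of $t^2$ negative, so $I(t u_0) \to -\infty$. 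Choosing $e = t u_0$ with $t$ large then yields $\|e\| > r$ and $I(e) < 0 < \beta$, completing the geometry.

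The main obstacle, and the only point requiring genuine care rather than routine estimation, is the passage from the hypothesis $f(t)t/|t|^2 \to +\infty$ in $(f_1)$ to a usable lower bound on $F$. One must derive $F(s)/s^2 \to +\infty$ from the integral $F(s) = \int_0^s f(\sigma)\,d\sigma$; this is clean when $f(t)/t$ itself tends to infinity, but $(f_1)$ only controls $f(t)t/|t|^2 = f(t)/t$ along sequences in the weak sense, so one should argue that given $M$ there is $t_M$ with $f(\sigma) \geq 2M\sigma$ for $\sigma \geq t_M$, integrate to get $F(s) \geq M s^2 - (F(t_M) + M t_M^2)$ for $s \geq t_M$, and absorb the constant. (Alternatively, the inequality (\ref{Eq03}) showing $F(t)/t^2$ is nondecreasing, combined with superlinearity, furnishes the blow-up directly.) Once this lower bound is in hand, the remaining steps are straightforward applications of the embeddings and the elementary analysis of the real-variable function $r \mapsto \tfrac14 r^2 - C r^q$.
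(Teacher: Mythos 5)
Your proposal is correct and follows essentially the same route as the paper: the small-sphere bound comes from the estimate $|F(t)|\leq \tau|t|^2+C_\tau|t|^q$ of Property \ref{nta1} together with the Sobolev embeddings, and the negative value far from the origin comes from the superlinearity of $F$ forcing $I(t\varphi)\to-\infty$ along a ray through a fixed nonzero function. The only difference is that you make explicit the step the paper leaves implicit, namely deriving $F(s)/s^2\to+\infty$ from $(f_1)$ and justifying $\int_{\mathbb{R}^n}F(t\varphi)/t^2\,dx\to+\infty$ via the lower bound $F(s)\geq Ms^2-C_M$ on a compactly supported test function, which is a sound way to close that gap.
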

\begin{proof}
By (\ref{Eq02}), 
\begin{eqnarray*}
I(u) &\geq& \frac{1}{2} \|u\|^{2} - \tau \|u\|_{L^2}^{2} - C_\tau \|u\|_{L^q}^{q}\\
&\geq& \left( \frac{1}{2} - \tau C_2 \right)\|u\|^{2} - C_\tau C_q\|u\|^{q}.
\end{eqnarray*}
Let $\tau>0$ small enough such that $\frac{1}{2} - \tau C_{2} >0$ and $\|u\| = \zeta$. Since $q>2$, we can take $\zeta$ small enough such that
$$
\frac{1}{2} - \tau C_2 - C_\tau C_q { \zeta }^{q-2} >0.
$$
Therefore 
$$
I(u) \geq \zeta^{2} \left( \frac{1}{2} - \tau C_{2} - C_\tau C_{q} { \zeta}^{q-2} \right) := \beta >0.
$$
\noindent
Now, by ($f_1$), 
$$
\lim_{|t|\to \infty} \frac{F(t)}{|t|^2} = +\infty.
$$
Then, for $\varphi \in C_{0}^{\infty}(\mathbb{R}^n) \setminus \{0\}$, 
$$
\lim_{|t| \to \infty} \int_{\mathbb{R}^n} \frac{F(t\varphi)}{|t|^2}dx = +\infty.
$$ 
Consequently,
$$
\frac{I(t\varphi)}{|t|^2} = \frac{1}{2}\|\varphi\|^{2} - \int_{\mathbb{R}^n} \frac{F(t\varphi)}{|t|^2}dx \to -\infty,\;\;\mbox{as}\;|t| \to \infty.
$$ 
Thereby, setting $t_{0}>0$ large enough and $e = t_{0}\varphi$, we have $I(e) < 0$.
\end{proof}

\begin{Lem}\label{3lem1}
Assume $(f_1)-(f_2)$, $\epsilon =1$ and that $\rho$ belongs to Class 1 or 2.  Let $c\in \mathbb{R}$ and $\{u_k\} \subset H_{\rho}^\alpha(\mathbb{R}^n)$ be a sequence such that
\begin{equation}\label{3Eq00}
I(u_k)\to c \quad \mbox{and} \quad I'(u_k)\to 0\;\;\mbox{as}\;\;k\to \infty.
\end{equation}
Then $\{u_k\}$ is bounded in $H_{\rho}^\alpha(\mathbb{R}^n)$.
\end{Lem}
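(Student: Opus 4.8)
The plan is to argue by contradiction: suppose, after passing to a subsequence, that $\|u_k\|\to\infty$. The starting point is the identity obtained by testing $I'(u_k)$ with $u_k$ itself,
\[
I(u_k)-\tfrac12 I'(u_k)u_k=\tfrac12\int_{\mathbb{R}^n}\mathcal{F}(u_k)\,dx .
\]
Since $I(u_k)\to c$ and $\|I'(u_k)\|\to0$, the left-hand side equals $c+o(1)-\tfrac12 I'(u_k)u_k$ with $|I'(u_k)u_k|\le o(1)\|u_k\|$; together with Property \ref{nta1} (which gives $\mathcal{F}\ge0$ and $F\ge0$) this will control the integrals below. I also record the normalized relation $\int_{\mathbb{R}^n}F(u_k)/\|u_k\|^2\,dx\to\tfrac12$, obtained by dividing $I(u_k)\to c$ by $\|u_k\|^2\to\infty$. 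Setting $v_k=u_k/\|u_k\|$, we have $\|v_k\|=1$, so up to a subsequence $v_k\rightharpoonup v$ in $H^\alpha_\rho(\mathbb{R}^n)$, with $v_k\to v$ a.e. and in $L^q_{loc}$ by the compact local embedding stated in Section 2.

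First I would rule out $v\not\equiv 0$. On the set $\{v\neq0\}$ one has $|u_k(x)|=|v_k(x)|\,\|u_k\|\to\infty$ a.e., and $(f_1)$ (in the form $\lim_{|t|\to\infty}F(t)/t^2=+\infty$, already exploited in Lemma \ref{mountain}) gives $F(u_k)/\|u_k\|^2=\bigl(F(u_k)/u_k^2\bigr)v_k^2\to+\infty$ there. Since $F\ge0$, Fatou's lemma then forces $\int_{\mathbb{R}^n}F(u_k)/\|u_k\|^2\,dx\to+\infty$ as soon as $|\{v\neq0\}|>0$, contradicting the normalized relation above. Hence $v\equiv0$.

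It remains to exclude $v_k\rightharpoonup0$ while $\|v_k\|=1$, and here I would invoke the dichotomy of Lemma \ref{lemCC}. If $\{v_k\}$ does not vanish, there are $\delta,R>0$ and $y_k$ with $\int_{B(y_k,R)}|v_k|^2\,dx\ge\delta$; choosing $z_k\in\mathbb{Z}^n$ with $y_k-z_k$ bounded and using the $\mathbb{Z}^n$-invariance of $\|\cdot\|$ and of $I$ recorded in Section 2 (for Class 2, where $\rho$ is only asymptotically periodic, one replaces exact invariance by the asymptotic control from $(\rho_3)$--$(\rho_4)$ via the periodic majorant $h_\infty$), the translates $\tilde u_k=u_k(\cdot+z_k)$ satisfy the same hypotheses while $\tilde v_k=\tilde u_k/\|u_k\|$ has a nonzero weak limit by the compact local embedding; this reduces to the previous paragraph and yields a contradiction. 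If instead $\{v_k\}$ vanishes, Lemma \ref{lemCC} gives $v_k\to0$ in $L^q(\mathbb{R}^n)$ for $q\in(2,2_{\alpha}^{*})$, whence $\int_{\mathbb{R}^n}F(Rv_k)\,dx\to0$ for every fixed $R>0$ by (\ref{Eq02}) and $F\ge0$; thus $I(Rv_k)\to R^2/2$. I would then pick $t_k\in[0,1]$ realizing $\max_{s\in[0,1]}I(su_k)$: for $R$ large and $k$ large, $Rv_k=(R/\|u_k\|)u_k$ with $R/\|u_k\|\in(0,1)$ and $I(t_ku_k)\ge I(Rv_k)>I(u_k)$, so the maximum is interior, $I'(t_ku_k)(t_ku_k)=0$, and therefore $2I(t_ku_k)=\int_{\mathbb{R}^n}\mathcal{F}(t_ku_k)\,dx\le\theta\int_{\mathbb{R}^n}\mathcal{F}(u_k)\,dx$ by $(f_3)$ with $\sigma=t_k$. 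Since the left-hand side exceeds $R^2-o(1)$, letting $R\to\infty$ shows $\int_{\mathbb{R}^n}\mathcal{F}(u_k)\,dx$ must be unbounded.

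The delicate point is precisely this last step: to turn $\int\mathcal{F}(t_ku_k)\le\theta\int\mathcal{F}(u_k)$ into a contradiction one needs $\int_{\mathbb{R}^n}\mathcal{F}(u_k)\,dx$ to stay bounded, i.e. $I'(u_k)u_k$ to stay controlled along the diverging sequence. From the first identity, $\int\mathcal{F}(u_k)=2c+o(1)-I'(u_k)u_k$, so the crux is to bound $I'(u_k)u_k$; this is exactly the information one must extract from the convergence $\|I'(u_k)\|\to0$ (it is immediate once $(1+\|u_k\|)\|I'(u_k)\|\to0$, which then gives $\int\mathcal{F}(u_k)\to 2c$ and the desired contradiction). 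Thus the main obstacle is the vanishing sub-case and the quantitative control of $I'(u_k)u_k$ there; a secondary technical difficulty is the translation argument under Class 2, where the functional is not exactly invariant and the resulting error terms must be absorbed using $(\rho_4)$ before passing to the weak limit.
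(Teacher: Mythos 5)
Your strategy is the paper's strategy almost line for line: argue by contradiction with $\|u_k\|\to\infty$, normalize $w_k=u_k/\|u_k\|$, rule out non-vanishing by combining $\int F(u_k)/\|u_k\|^2\to\tfrac12$ (which needs only $I(u_k)\to c$) with Fatou's lemma and $\lim_{|t|\to\infty}F(t)/t^2=+\infty$ on the set where the translated weak limit is nonzero, then in the vanishing case use Lemma \ref{lemCC}, the maximizer $t_k\in(0,1)$ of $I(tu_k)$ on $[0,1]$, the identity $2I(t_ku_k)=\int\mathcal{F}(t_ku_k)$ and $(f_3)$ to force $\int_{\mathbb{R}^n}\mathcal{F}(u_k)\,dx\to+\infty$. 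One remark before the main point: your ``secondary technical difficulty'' about Class 2 is not actually an issue. In this lemma the only translated object is $\int_{\mathbb{R}^n}F(u_k)\,dx$, which is invariant under any translation since it is the integral over all of $\mathbb{R}^n$ of a pointwise composition, and boundedness of the translates in $H^{\alpha}$ follows from the sandwich $\|w_k\|_0\le\|w_k(\cdot+z_k)\|\le\|w_k\|_*$ with the constant-scope norms $\|\cdot\|_0$, $\|\cdot\|_*$ introduced at the start of the paper's proof; no periodicity of $\rho$ is used anywhere in this lemma, which is precisely why it covers Classes 1 and 2 simultaneously.

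The more important point is that you stop exactly where the proof has to end: you never derive the contradiction in the vanishing case, but only observe that it would follow if $I'(u_k)u_k$ were controlled, e.g.\ under the Cerami-type condition $(1+\|u_k\|)\|I'(u_k)\|\to0$. As submitted this is a genuine gap: the key inequality $\int\mathcal{F}(t_ku_k)\,dx\le\theta\int\mathcal{F}(u_k)\,dx$ is left unexploited. The paper closes the argument by asserting at the outset (its (3.1)) that $c=\lim_k\bigl(I(u_k)-\tfrac12 I'(u_k)u_k\bigr)=\lim_k\int(\tfrac12 f(u_k)u_k-F(u_k))\,dx$, and then contradicting this finite limit with $\int(\tfrac12 f(u_k)u_k-F(u_k))\,dx\ge\tfrac1\theta I(t_ku_k)\to+\infty$. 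In other words, the paper simply takes $I'(u_k)u_k=o(1)$ for granted. You are right that $\|I'(u_k)\|\to0$ alone only gives $|I'(u_k)u_k|=o(\|u_k\|)$, which is not $o(1)$ along the hypothesized unbounded subsequence, so your reservation points at a real (if standard) imprecision in the statement; in the application the $(PS)$ sequences are produced by Ekeland's principle and can be taken with the extra decay. But to count as a proof of the lemma you must either add that control as a hypothesis, justify it, or reproduce the paper's step (3.1); as written, your attempt identifies the obstacle without overcoming it.
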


\begin{proof} To begin with, we recall that
$$
\rho_0 \leq \rho(x) \leq \rho_*, \quad \forall x \in \mathbb{R}^n,
$$	
for $\rho_*=\displaystyle \sup_{x \in \mathbb{R}^n}\rho(x)$. Hence, the functions below 
$$
\|u\|_0=\left( \int_{\mathbb{R}^n}\int_{B(0, \rho_0)} \frac{|u(x+z)-u(x)|^2 }{|z|^{n+2\alpha}}\, dzdx  + \int_{\mathbb{R}^n}|u(x)|^2dx \right)^{\frac{1}{2}} 
$$
and
$$
\|u\|_*=\left( \int_{\mathbb{R}^n}\int_{B(0, \rho_*)} \frac{|u(x+z)-u(x)|^2}{|z|^{n+2\alpha}} \, dzdx  + \int_{\mathbb{R}^n}|u(x)|^2dx \right)^{\frac{1}{2}} 
$$
are equivalents norms to $\|\,\,\,\|$ on $H_{\rho}^\alpha(\mathbb{R}^n)$. Now, arguing by contradiction we suppose that $\{u_k\}$ is unbounded. Then, up to a subsequence, we may assume that
$$
\|u_k\|\to \infty \;\;\mbox{as}\;\;k\to \infty.
$$ 
Thus
\begin{equation}\label{3Eq01}
c = \lim_{k\to \infty}\left( I(u_k) - \frac{1}{2}I'(u_k)u_k \right) = \lim_{k\to \infty} \int_{\mathbb{R}^n} \left(\frac{1}{2}f(u_k(x))u_k(x) - F(u_k(x))  \right)dx.
\end{equation}
Let $w_k = \frac{u_k}{\|u_k\|}$, then $\{w_k\}$ is bounded in $H_{\rho}^\alpha(\mathbb{R}^n)$. We claim that, 
\begin{equation}\label{3Eq02}
\lim_{k\to \infty}\sup_{y\in \mathbb{R}^n} \int_{B(y, 2)} |w_k(x)|^2dx=0.
\end{equation}
Otherwise, for some $\delta >0$, up to a subsequence we have
$$
\sup_{y\in \mathbb{R}^n} \int_{B(y,2)} |w_k(x)|^2dx \geq \delta >0.
$$ 
Let $z_k \in \mathbb{R}^n$ such that
\begin{equation}\label{3Eq03}
\int_{B(z_k ,2)} |w_k(x)|^2dx \geq \tau:= \frac{\delta}{2}>0
\end{equation}
and $v_k (x)= w_k(x+z_k)$. By the change of variable $\tilde{x} = x+ y_k$, we find
$$
\begin{aligned}
\|w_k\|_0 \leq \|v_k\| \leq \|w_k\|_* 
\end{aligned}
$$
from where it follows that  $\{v_k\}$ is also bounded in $H_{\rho}^\alpha(\mathbb{R}^n)$. Passing to a subsequence, we obtain
$$
v_k \to v\;\;\mbox{in}\;\;L_{loc}^{p}(\mathbb{R}^n)\quad \mbox{and} \quad v_k(x) \to v(x)\;\;\mbox{a.e.}\;\;x\in \mathbb{R}^n.
$$
Since
\begin{equation}\label{3Eq04}
\int_{B(0,2)} |v_k(x)|^2dx = \int_{B(z_k, 2)}|w_k(x)|^2dx \geq \tau >0,
\end{equation}
we see that $v\neq 0$. Let $\tilde{u}_k(x) = \|u_k\|v_k(x)$. If $v(x) \neq 0$, we have the limit $|\tilde{u}_k(x)| \to +\infty$ which together with ($f_3$) leads to 
\begin{equation}\label{3Eq05}
\frac{F(\tilde{u}_k(x))}{|\tilde{u}_k(x)|^2}|v_k(x)|^2 \to +\infty.
\end{equation}
The last limit combine with  (\ref{3Eq05}) to give  
\begin{eqnarray}\label{3Eq06}
\frac{1}{2} - \frac{c + o(1)}{\|u_k\|^2} & = & \int_{\mathbb{R}^n} \frac{F(u_k(x))}{\|u_k\|^2}dx\nonumber\\
& = & \int_{\mathbb{R}^n} \frac{F(\tilde{u}_k(x))}{\|u_k\|^2}dx\nonumber\\
&\geq& \int_{\{v\neq 0\}} \frac{F(\tilde{u}_k(x))}{|\tilde{u}_k(x)|}|v_k(x)|^2dx \to +\infty,
\end{eqnarray} 
which is impossible. This shows (\ref{3Eq02}). Then, by Lemma \ref{lemCC} 
\begin{equation}\label{3Eq07}
w_k \to 0\;\;\mbox{in}\;\;L^q(\mathbb{R}^n),\;\;\forall \;q\in (2,2_{\alpha}^{*}).
\end{equation}
We are going to get a contradiction as follow. By { Property \ref{nta1} - (1)}, given $\tau >0$, there exists $C_\tau >0$ such that
\begin{equation}\label{3Eq08}
|F(t)| \leq \tau |t|^2 + C_\tau |t|^q.
\end{equation}
Since $\|w_k\| = 1$, there exists a constant $K>0$ such that
$$
\|w_k\|_{L^2}^{2} \leq K.
$$
Therefore, by (\ref{3Eq07}) and (\ref{3Eq08}) 
$$
\limsup_{k\to \infty} \int_{\mathbb{R}^n} |F(w_k(x))|dx \leq \limsup_{k\to \infty} (\tau \|w_k\|_{L^2}^{2} +C_\tau \|w_k\|_{L^q}^{q}) \leq \epsilon K.
$$
Since $\tau$ is arbitrary, we deduce
\begin{equation}\label{3Eq09}
\lim_{k\to \infty} \int_{\mathbb{R}^n} F(w_k(x))dx = 0.
\end{equation}
Now, we choose a sequence $\{t_k\} \in [0,1]$ such that 
\begin{equation}\label{3Eq10}
I(t_ku_k) = \max_{t\in [0,1]}I(tu_k).
\end{equation}
Given $\sigma >0$, noting that $\frac{(4\sigma)^{1/2}}{\|u_k\|} \in (0,1)$ for $k$ large enough, (\ref{3Eq09}) ensures that   
$$
\begin{aligned}
I(t_ku_k) &\geq I((4\sigma)^{1/2}w_k) = \frac{1}{2}\|(4\sigma)^{1/2}w_k\|^2 - \int_{\mathbb{R}^n}F((4\sigma)^{1/2}w_k(x))dx\\
& = 2\sigma - \int_{\mathbb{R}^n} F((4\sigma)^{1/2}w_k(x))dx \geq \sigma.
\end{aligned}
$$
Namely, $I(t_k u_k) \to +\infty$. But $I(0) = 0$ and $I(u_k) \to c$, then by (\ref{3Eq10}) we see that $t_k\in (0,1)$ and 
\begin{equation}\label{3Eq11}
\begin{aligned}
0 &= t_k \frac{d}{dt}I(tu_k)\Big|_{t=t_k}\\
&= \int_{\mathbb{R}^n} \int_{B(0,\rho (x))} \frac{|t_ku_k(x+z) - t_ku_k(x)|^2}{|z|^{n+2\alpha}}dzdx + \int_{\mathbb{R}^n} V(x)|t_ku_k(x)|^2dx \\
&- \int_{\mathbb{R}^n} f(t_ku_k(x))t_ku_k(x)dx.
\end{aligned}
\end{equation}
Now from (\ref{3Eq11}) and $(f_3)$, 
$$
\begin{aligned}
\int_{\mathbb{R}^n} \left(\frac{1}{2}f(u_k)u_k - F(u_k)  \right) dx &\geq \frac{1}{\theta} \int_{\mathbb{R}^n} \left( \frac{1}{2}f(t_ku_k)t_ku_k - F(t_ku_k) \right)dx\\
& = \frac{1}{\theta} \left( \frac{1}{2}\|t_ku_k\|^2 - \int_{\mathbb{R}^n}F(t_ku_k)dx \right)\\
& = \frac{1}{\theta} I(t_ku_k) \to +\infty.
\end{aligned}
$$
This contradicts with (\ref{3Eq01}). Thereby, $\{u_k\}$ is bounded.
\end{proof}

\section{Proof of Theorem \ref{main}}

In the sequel, we will analysis  the classes $(\rho_1),(\rho_2)$ and $(\rho_3)$ separately. 

\subsection{Class 1: $\rho$ is periodic}

Let $c = \inf_{\gamma \in \Gamma} \max_{t\in [0,1]} I(\gamma (t)) >0$, then by the Ekeland  variational principle, there is a sequence $\{u_k\}$ such that
$$
I(u_k)\to c\;\;\mbox{and}\;\;I'(u_k)\to 0.
$$
By Lemma \ref{3lem1}, $\{u_k\}$ is bounded in $H_{\rho}^\alpha(\mathbb{R}^n)$. In what follows, fix  
\begin{equation}\label{3Eq12}
\delta = \lim_{n\to \infty} \sup_{y\in \mathbb{R}^n} \int_{B(y,2)} |u_k(x)|^2dx.
\end{equation}
If $\delta = 0$, the Lemma \ref{lemCC} yields  
$$
u_k\to 0\;\;\mbox{in}\;\;L^q(\mathbb{R}^n),\;\;\forall \;q\in (2,2_\alpha^{*}).
$$
Then, arguing as in (\ref{3Eq09}),
\begin{equation}\label{3Eq13}
\lim_{k\to \infty}\int_{\mathbb{R}^n} F(u_k(x))dx = 0\;\;\mbox{and}\;\;\lim_{k\to \infty} \int_{\mathbb{R}^n} f(u_k(x))u_k(x)dx = 0.
\end{equation}
The above limits together with (\ref{3Eq01}) implies that $c = 0$, a contradiction. Therefore $\delta >0$. So there exists a sequence $\{y_k\} \subset \mathbb{Z}^n$ and a real number $\tau>0$ such that
\begin{equation}\label{3Eq14}
\int_{B(0, 2)} |v_k(x)|^2dx = \int_{B(y_k, 2)}|u_k(x)|^2dx > \tau,
\end{equation}
where $v_k(x) = u_k(x+y_k)$. Moreover, since $\|v_k\| = \|u_k\|$, going if necessary to a subsequence, there is $v \in H_{\rho}^\alpha(\mathbb{R}^n) \setminus \{0\}$ such that 
$$
v_k \rightharpoonup v\;\;\mbox{in}\;\;H_{\rho}^\alpha(\mathbb{R}^n) \quad \mbox{and} \quad v_k \to v\;\;\mbox{in}\;\;L_{loc}^{p}(\mathbb{R}^n);
$$
Furthermore, by the $\mathbb{Z}^n$ invariance of the problem, $\{v_k\}$ is also a $(PS)_c$ sequence of $I$. Thus for every $\varphi \in C_{0}^{\infty}(\mathbb{R}^n)$, 
$$
I'(v)\varphi  = \lim_{k\to \infty} I'(u_k) \varphi  = 0.
$$ 
So $I'(v) = 0$ and $v$ is a nontrivial weak solution of $(P_\epsilon)$. Moreover, $(f_4)$ together with Fatou's Lemma gives $I(v) \leq c$.  

\vspace{0.5 cm}

\subsection{ Class 2:\,\,\,  $\rho$ is asymptotically periodic}

Hereafter, we denote by $I_\infty:H^{\alpha}_{h_\infty}(\mathbb{R}^n) \to \mathbb{R}$ the functional 
$$
I_\infty(u) = \frac{1}{2}\left( \int_{\mathbb{R}^n}\int_{B(0, h_\infty (x))} \frac{|u(x+z) - u(x)|^2}{|z|^{n+2\alpha}}dzdx + \int_{\mathbb{R}^n}|u(x)|^2dx \right) - \int_{\mathbb{R}^n}F(u(x))dx
$$
and by $w_\infty \in H^{\alpha}_{h_\infty}(\mathbb{R}^n)$ be a nontrivial critical point of $I_\infty$, which was obtained in the last subsection. Then,  
$$
I_\infty(w_\infty) \leq c_\infty \quad \mbox{and} \quad I'_\infty(w_\infty)=0,
$$  
where $c_\infty$ denotes the mountain pass level of $I_\infty$. Since we are assuming $(f_4)$, we know that
$$
c_\infty=\inf_{u \in \mathcal{N}_\infty}I_\infty(u)
$$
where
$$
 \mathcal{N}_\infty=\{u \in H^{\alpha}_{h_\infty}(\mathbb{R}^n) \setminus \{0\} \,:\, I'_{\infty}(u)u=0\},
$$
and so, $I_\infty(w_\infty) = c_\infty$. If $c$ denotes the mountain pass level associated with $I$, the condition $(\rho_3)$ gives  $c \leq c_\infty$. Next, we will study the following situations:
$$
c=c_\infty \quad \mbox{and} \quad c< c_\infty.
$$
\noindent {\bf Case 1:\,\, ${\bf c=c_\infty}$. } \, As $\rho \leq h_\infty$ and $I_\infty'(w_\infty)w_\infty=0$, we have that
$$
I'(w_\infty)w_\infty \leq 0,
$$
hence there is $t \in (0,1]$ such that 
$$
tw_\infty \in \mathcal{N}=\{u \in H^{\alpha}_{\rho}(\mathbb{R}^n) \setminus \{0\} \,:\, I'(u)u=0\}.
$$
By $(f_4)$,
$$
c=\inf_{u \in \mathcal{N}}I(u),
$$
then, as $t \in (0,1]$,
$$
c \leq I(tw_\infty)=I(tw_\infty)-\frac{1}{2}I'(tw_\infty)(tw_\infty)\leq I_\infty(w_\infty)-\frac{1}{2}I_\infty'(w_\infty)(w_\infty),
$$
that is, 
$$
c \leq I_\infty(w_\infty)-\frac{1}{2}I'_\infty(w_\infty)(w_\infty)=I_\infty(w_\infty)=c_\infty.
$$
Since we are supposing that $c=c_\infty$, we deduce that $u^*=tw_\infty$ verifies 
$$
I(u^*)=c \quad \mbox{and} \quad I'(u^*)=0.
$$
By $(f_4)$, it is easy to prove that $u^*$ is a critical for $I$, which finishes the proof.  

\vspace{0.5 cm}

\noindent{\bf Case 2:\,\, ${\bf c<c_\infty.}$}\,\, Hereafter, we denote by $\{u_n\} \subset H^{\alpha}_\rho(\mathbb{R}^n)$ a sequence which satisfies
$$
I(u_k) \to c \quad \mbox{and} \quad I'(u_k) \to 0.
$$ 
By using standard arguments, we know that $\{u_k\}$ is a bounded sequence in $H^{\alpha}_\rho(\mathbb{R}^n)$. Hence, for some subsequence, there is $u \in H^{\alpha}_\rho(\mathbb{R}^n)$ such that
$$
u_k \rightharpoonup u \quad \mbox{in} \quad H^{\alpha}_\rho(\mathbb{R}^n).
$$
{\bf Claim:}\,\, $u \not=0$.  \\
\noindent If $u=0$, there are $R,\eta>0$ and $\{y_k\} \subset \mathbb{R}^n$ such that
\begin{equation} \label{E1}
\limsup_{k \to +\infty}\int_{B_R(y_k)}|u_k|^{2}\,dx \geq \eta.
\end{equation}
Indeed, otherwise we must have 
$$
\lim_{n \to +\infty} \sup_{y \in \mathbb{R}^N}\int_{B_R(y)}|u_k|^{2}\,dx=0.
$$
Then, by Lemma \ref{lemCC}, 
$$
u_k \to 0 \quad \mbox{in} \quad L^{q}(\mathbb{R}^n) \quad \forall q \in (2,2^*), 
$$
from where it follows that
$$
\int_{\mathbb{R}^n} f(u_k)u_k \,dx \to 0.
$$
The above limit together with  $I'(u_k)u_k=o_n(1)$ implies that $u_k \to 0$ in $H^{\alpha}_\rho(\mathbb{R}^n)$, which contradicts the limit $I(u_k) \to c>0$. 

Setting $v_k(x)=u_n(x+y_k)$ and considering $y_k \in \mathbb{Z}^n$, we have that $\{v_k\}$ is bounded in $H^{\alpha}_{h_\infty}(\mathbb{R}^n)$ and there is $v \in H^{\alpha}_{h_\infty}(\mathbb{R}^n)$ such that
$$
v_k \rightharpoonup v \quad \mbox{in} \quad  H^{\alpha}_{h_\infty}(\mathbb{R}^n)
$$
and
$$
\int_{B_R(0)}|v|^{2}\,dx\geq \eta>0
$$
which shows $v \not=0$. 

From (\ref{E1}), it is easy to see that $|y_k| \to +\infty$. Then, by $(\rho_4)$
$$
\rho(x+y_k) \to h_\infty(x) \quad \forall x \in \mathbb{R}^n \quad \mbox{as} \quad k \to +\infty.
$$
The above limit and $I'(u_k)(v(.-y_k))=o_k(1)$ combine to give 
$$
I'_\infty(v)v \leq 0. 
$$
Thus, there is $s \in (0,1]$ such that $su \in \mathcal{N}_\infty$. Consequently,
$$
c_\infty \leq I_\infty(sv)=I_\infty(sv)-\frac{1}{2}I'_\infty(sv)(sv) \leq I_\infty(v)-\frac{1}{2}I'_\infty(v)(v).
$$
Since
$$
I_\infty(v)-\frac{1}{2}I'_\infty(v)(v)=I(v)-\frac{1}{2}I'(v)(v)
$$
it follows 
$$
c_\infty \leq I(v)-\frac{1}{2}I'(v)(v).
$$
On the other hand, the Fatou's Lemma leads to
$$
I(v)-\frac{1}{2}I'(v)(v) \leq \liminf_{k \to +\infty}(I(v_k)-\frac{1}{2}I'(v_k)(v_k))
=\liminf_{k \to +\infty}(I(u_k)-\frac{1}{2}I'(u_k)(u_k))
$$ 
that is,
$$
c_\infty \leq \liminf_{k \to +\infty}I(u_k)=c
$$
which is a contradiction, because we are supposing  $c<c_\infty$. 

From this $u \not= 0$ and $I'(u)=0$, which implies that $I$ has a nontrivial critical point. Moreover, by Fatou's Lemma, it is possible to prove that $I(u)=c$.

\subsection{Class 3: $\rho$ has finite global minimum points} 

Hereafter, we will consider the following energy functional \linebreak $J_{\epsilon}:H^{\alpha}_{\rho}(\mathbb{R}^n) \to \mathbb{R}$ defined
by
\[
J_{\epsilon}(u) =\frac{1}{2}\left( \int_{\mathbb{R}^n}\int_{B(0,\rho_\epsilon (x))} \frac{|u(x+z) - u(x)|^2}{|z|^{n+2\alpha}}dzdx + \int_{\mathbb{R}^n}|u(x)|^2dx \right) - \int_{\mathbb{R}^n}F(u(x))dx.
\]
It is easy to see that $J_{\epsilon}\in C^{1}\left( H^{\alpha}_{\rho}(\mathbb{R}^n),\mathbb{R}\right)  $ with
$$
\begin{aligned}
J_{\epsilon}'(u)v &= \int_{\mathbb{R}^n}\int_{B(0,\rho_\epsilon (x))}\frac{[u(x+z)-u(x)][v(x+z)-v(x)]}{|z|^{n+2\alpha}}dzdx+ \int_{\mathbb{R}^n}u(x)v(x)dx\\
& - \int_{\mathbb{R}^n}f(u(x))v(x)dx,
\end{aligned}
$$
for any $u,v\in  H^{\alpha}_{\rho}(\mathbb{R}^n)$. Thus, the critical points of $ J_{\epsilon} $ are (weak) solutions  of  $(P_\epsilon)$. Since the functional $J_{\epsilon}$ is not bounded from below on $  H^{\alpha}_\rho(\mathbb{R}^n)$ , we will work on \emph{Nehari manifold} $ \mathcal{N}_{\epsilon}$ associated with the functional $J_{\epsilon}$, given by
$$
\mathcal{N}_{\epsilon} = \left\{ u \in H^{\alpha}_{\rho}(\mathbb{R}^n) \setminus\{ 0\}: J'_{\epsilon}(u) u = 0 \right\}
$$
and with the level
\[
c_{\epsilon} = \inf_{u \in \mathcal{N}_{\epsilon}} J_{\epsilon}(u).
\]
It is possible to prove that $c_{\epsilon} $ is the mountain pass level of functional $J_{\epsilon}$, see Willem \cite{MW}.

For $ \rho \equiv 1 $, we consider the problem
$$
(-\Delta)_{1}^{\alpha}u + u = f(u)   \mbox{ in } \mathbb{R}^{n}, \quad 
u \in H^{\alpha}_1(\mathbb{R}^{n}).\nonumber \eqno{({ P_\infty})}
$$
Associated with the problem $({P_\infty})$, we have the energy functional \linebreak $ J_{1}:H^{\alpha}_{ \infty}(\mathbb{R}^n)  \to \mathbb{R}$ given by
\[
J_{ \infty}(u) =\frac{1}{2}\left( \int_{\mathbb{R}^n}\int_{B(0,1)} \frac{|u(x+z) - u(x)|^2}{|z|^{n+2\alpha}}dzdx + \int_{\mathbb{R}^n}|u(x)|^2dx \right) - \int_{\mathbb{R}^n}F(u(x))dx,
\]
the level
$$
c_{ \infty} = \inf_{u \in \mathcal{M}_{ \infty}} J_{ \infty}(u)
$$
and the Nehari manifold
$$
\mathcal{M}_{ \infty} = \left\{ u \in H^{\alpha}_{ \infty}(\mathbb{R}^n) \setminus\{ 0\}: J'_{ \infty}(u)u = 0 \right\}.
$$

For $ \rho \equiv \rho_{\infty} $, we fix the problem
$$
(-\Delta)_{\rho_\infty}^{\alpha}u + u = f(u)   \mbox{ in } \mathbb{R}^{n}, \quad 
u \in H^{\alpha}_{\rho_\infty}(\mathbb{R}^{n})\nonumber, \eqno{(P_{\rho_{\infty}})}
$$
and as above, we denote by $ J_{\rho_\infty}, c_{\rho_{\infty}}$ and $\mathcal{M}_{\rho_{\infty}}$ the energy functional, the mountain pass level and Nehari manifold associated with $ (P_{\rho_{\infty}}) $ respectively.

\vspace{0.5 cm}
The following result concerns the behavior of $J_{\epsilon} $ on $ \mathcal{M}_{\epsilon} $. Once its proof is standard, we omit it

\begin{lemma}
The functional $ J_{\epsilon} $ is bounded from below on $ \mathcal{M}_{\epsilon} $. Moreover, $ J_{\epsilon} $ is coercive on $ \mathcal{N}_{\epsilon} $.
\end{lemma}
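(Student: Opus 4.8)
The plan is to deduce both assertions from a single elementary estimate, combining the defining constraint of the Nehari manifold with the Ambrosetti--Rabinowitz condition $(f_3')$. First I would record that for $u \in \mathcal{N}_{\epsilon}$ the relation $J'_{\epsilon}(u)u=0$ reads
$$
\int_{\mathbb{R}^n}\int_{B(0,\rho_\epsilon (x))} \frac{|u(x+z) - u(x)|^2}{|z|^{n+2\alpha}}\,dz\,dx + \int_{\mathbb{R}^n}|u(x)|^2\,dx = \int_{\mathbb{R}^n} f(u(x))u(x)\,dx,
$$
so that the left-hand side is exactly $\|u\|^2$ for the norm associated with $\rho_\epsilon$, which by the Preliminaries is equivalent to $\|\,\,\,\|$ since $H^{\alpha}_{\rho_\epsilon}(\mathbb{R}^n)=H^{\alpha}(\mathbb{R}^n)$ with equivalent norms.

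Next, using $(f_3')$ in the form $F(t)\le \frac{1}{\theta}f(t)t$ with $\theta>2$, I would substitute the Nehari constraint into the energy to obtain, for every $u\in\mathcal{N}_{\epsilon}$,
$$
J_{\epsilon}(u)=\frac{1}{2}\|u\|^2-\int_{\mathbb{R}^n}F(u)\,dx \ \geq\ \frac{1}{2}\|u\|^2-\frac{1}{\theta}\int_{\mathbb{R}^n}f(u)u\,dx=\left(\frac{1}{2}-\frac{1}{\theta}\right)\|u\|^2.
$$
Since $\theta>2$ yields $\frac{1}{2}-\frac{1}{\theta}>0$, the right-hand side is nonnegative, which already shows that $J_{\epsilon}$ is bounded from below on the Nehari manifold (by $0$), and it tends to $+\infty$ as $\|u\|\to\infty$, which is precisely coercivity on $\mathcal{N}_{\epsilon}$. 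Thus the two conclusions are two faces of the same inequality.

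There is no genuine obstacle here; only two technical points need checking. The first is that the positivity half of $(f_3')$ guarantees $F(t)>0$ for $t\neq 0$, so that the inequality $F(t)\le \frac{1}{\theta}f(t)t$ carries no sign issue; this is immediate from $0<\theta F(t)\le f(t)t$. The second is the identification of the $\rho_\epsilon$-norm appearing in $J_{\epsilon}$ with the fixed norm $\|\,\,\,\|$, legitimate by the norm equivalence recalled in Section~2, so that coercivity measured in either norm is the same statement. Replacing $\mathcal{N}_{\epsilon}$ by the Nehari manifolds $\mathcal{M}_{\infty}$ or $\mathcal{M}_{\rho_{\infty}}$ of the limit problems, the identical computation applies verbatim, which is why the proof is deferred as standard.
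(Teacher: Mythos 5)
Your argument is correct and is precisely the standard computation the paper omits: combining the Nehari constraint $\|u\|^2=\int_{\mathbb{R}^n}f(u)u\,dx$ with $(f_3')$ (which is in force for Class 3) to get $J_{\epsilon}(u)\ge\bigl(\tfrac{1}{2}-\tfrac{1}{\theta}\bigr)\|u\|^2$, which yields both the lower bound and coercivity at once. The only cosmetic remark is that $\mathcal{M}_{\epsilon}$ and $\mathcal{N}_{\epsilon}$ in the statement are the same Nehari manifold (a notational slip in the paper), and you rightly treat them as such.
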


As an immediate consequence of the last lemma, we have

\begin{corollary}\label{ltdalem}
Let $ \{u_{k}\} $ be a sequence in $ \mathcal{N}_{\epsilon} $ and  $ J_{\epsilon}(u_{k}) \to c_{\epsilon} $. Then $ \{u_{k}\} $ is bounded in $ H^{\alpha}_{\rho}(\mathbb{R}^n) $.
\end{corollary}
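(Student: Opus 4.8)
The plan is to read off the corollary as a one-line contradiction argument from the coercivity asserted in the preceding lemma. Since $J_{\epsilon}(u_k)\to c_{\epsilon}$, the real sequence $\{J_{\epsilon}(u_k)\}$ is bounded, so there is $M>0$ with $J_{\epsilon}(u_k)\le M$ for all $k$. Suppose, for contradiction, that $\{u_k\}$ is unbounded in $H^{\alpha}_{\rho}(\mathbb{R}^n)$. Passing to a subsequence we may assume $\|u_k\|\to\infty$. Because each $u_k$ lies on $\mathcal{N}_{\epsilon}$ and $J_{\epsilon}$ is coercive on $\mathcal{N}_{\epsilon}$ (the previous lemma), we get $J_{\epsilon}(u_k)\to +\infty$, contradicting $J_{\epsilon}(u_k)\le M$. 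Hence $\{u_k\}$ must be bounded.

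To keep the estimate self-contained I would also record the inequality underlying the coercivity lemma, which here relies on the Ambrosetti--Rabinowitz-type hypothesis $(f_3')$. For $u\in\mathcal{N}_{\epsilon}$ one has $J'_{\epsilon}(u)u=0$, so
$$
J_{\epsilon}(u)=J_{\epsilon}(u)-\frac{1}{\theta}J'_{\epsilon}(u)u=\left(\frac{1}{2}-\frac{1}{\theta}\right)\|u\|_{\epsilon}^{2}+\int_{\mathbb{R}^n}\left(\frac{1}{\theta}f(u)u-F(u)\right)dx,
$$
where $\|u\|_{\epsilon}^{2}$ denotes the norm built from $\rho_{\epsilon}$, that is $\|u\|_{\epsilon}^{2}=\int_{\mathbb{R}^n}\int_{B(0,\rho_\epsilon(x))}\frac{|u(x+z)-u(x)|^2}{|z|^{n+2\alpha}}dzdx+\int_{\mathbb{R}^n}|u(x)|^2dx$. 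Condition $(f_3')$ gives $\frac{1}{\theta}f(t)t-F(t)\ge 0$, while $\theta>2$ makes the coefficient $\frac{1}{2}-\frac{1}{\theta}$ strictly positive; hence $J_{\epsilon}(u)\ge\left(\frac{1}{2}-\frac{1}{\theta}\right)\|u\|_{\epsilon}^{2}$. Combined with $J_{\epsilon}(u_k)\le M$ this yields a uniform bound on $\|u_k\|_{\epsilon}$, and therefore on $\|u_k\|$ once the norms are compared.

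The only points needing any care are routine. First, I would invoke the equivalence of the $\rho_{\epsilon}$-norm with the fixed norm $\|\cdot\|$, which holds because $\rho_{\epsilon}(x)=\rho(\epsilon x)$ is bounded below by $\rho_0=1$ and above by $\rho_\infty$ thanks to $(\rho_1)$ and $(\rho_5)$; this transfers boundedness in $\|\cdot\|_{\epsilon}$ to boundedness in $\|\cdot\|$. Second, I would check the sign of the integral term through $(f_3')$. There is no genuine obstacle here: the analytic content has already been absorbed into the coercivity lemma, and the corollary is merely its restatement for an energy-bounded Nehari sequence.
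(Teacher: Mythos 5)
Your proposal is correct and follows exactly the route the paper intends: the corollary is stated there as an immediate consequence of the coercivity of $J_{\epsilon}$ on $\mathcal{N}_{\epsilon}$, whose (omitted) standard proof is precisely the Ambrosetti--Rabinowitz computation $J_{\epsilon}(u)=J_{\epsilon}(u)-\frac{1}{\theta}J_{\epsilon}'(u)u\geq\left(\frac{1}{2}-\frac{1}{\theta}\right)\|u\|_{\epsilon}^{2}$ that you spell out. Your added remarks on the sign of the integral term via $(f_3')$ and on the uniform equivalence of the $\rho_{\epsilon}$-norm with $\|\cdot\|$ are accurate and only make explicit what the paper leaves implicit.
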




The next theorem is a version of a result compactness on Nehari manifolds  due to Alves \cite{alves05} for regional fractional laplacian. It establishes that problem $(P_\infty)$ has a ground state solution.

\begin{teo}\label{TeoComp}
Let $ \{u_{k}\} \subset \mathcal{M}_{\infty} $ be a sequence with $ J_{\infty}(u_{k}) \to c_{\infty} $. Then,
\begin{description}
\item[I.] $ u_{k} \to u $ in $H^{\alpha}_1(\mathbb{R}^n) $,

or

\item[II.] There is $ \{y_{k}\} \subset \mathbb{R}^{n}$ with $|y_k| \to +\infty$ and $w \in H^{\alpha}_1(\mathbb{R}^n) $ such that $ w_{k} = u_{k}(\cdot + y_{k}) \to w $ in $ H^{\alpha}_1(\mathbb{R}^n) $ and $J_{\infty}(w) = c_{\infty}$.
\end{description}
\end{teo}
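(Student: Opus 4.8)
The plan is to combine the boundedness furnished by Corollary \ref{ltdalem} with the concentration--compactness Lemma \ref{lemCC}, to locate the mass of the minimizing sequence by a translation, and then to upgrade weak convergence to strong convergence; the two alternatives I and II will correspond to whether the translation vectors stay bounded or escape to infinity. Throughout I write $\|\cdot\|_1$ for the Hilbert norm of $H^\alpha_1(\mathbb{R}^n)$ associated with $J_\infty$, and I use repeatedly that, since the ball $B(0,1)$ does not depend on $x$, both $J_\infty$ and $\|\cdot\|_1$ are invariant under the translations $u\mapsto u(\cdot+y)$, $y\in\mathbb{R}^n$. Using $(f_4)$, the Nehari manifold $\mathcal{M}_\infty$ is a natural constraint on which each ray has a unique maximum, so by Ekeland's variational principle I may assume that $\{u_k\}$ is a Palais--Smale sequence, namely $J_\infty(u_k)\to c_\infty$ and $J'_\infty(u_k)\to 0$ in $(H^\alpha_1(\mathbb{R}^n))'$. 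By Corollary \ref{ltdalem} the sequence is bounded, so up to a subsequence $u_k\rightharpoonup u$ in $H^\alpha_1(\mathbb{R}^n)$.

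Next I would exclude vanishing. Put $\delta=\lim_k\sup_{y\in\mathbb{R}^n}\int_{B(y,R)}|u_k|^2\,dx$. If $\delta=0$, Lemma \ref{lemCC} gives $u_k\to 0$ in $L^q(\mathbb{R}^n)$ for $q\in(2,2_\alpha^*)$, whence $\int_{\mathbb{R}^n} f(u_k)u_k\,dx\to 0$ by the growth conditions $(f_1)$--$(f_2)$; since $u_k\in\mathcal{M}_\infty$ this forces $\|u_k\|_1\to 0$ and thus $c_\infty=\lim_k J_\infty(u_k)=0$, contradicting $c_\infty>0$. Hence $\delta>0$, and there exist $\{y_k\}\subset\mathbb{R}^n$ and $\tau>0$ with $\int_{B(y_k,R)}|u_k|^2\,dx\geq\tau$ for all large $k$. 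By translation invariance the functions $v_k:=u_k(\cdot+y_k)$ again satisfy $v_k\in\mathcal{M}_\infty$, $J_\infty(v_k)\to c_\infty$ and $J'_\infty(v_k)\to 0$; they are bounded, so $v_k\rightharpoonup w$ for some $w$, and the compact embedding $H^\alpha_1\hookrightarrow L^2_{loc}$ preserves the mass $\int_{B(0,R)}|w|^2\,dx\geq\tau>0$, so that $w\neq 0$.

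Then I would identify $w$ as a ground state and recover strong convergence. Passing to the limit in $J'_\infty(v_k)\varphi\to J'_\infty(w)\varphi$ for $\varphi\in C^\infty_0(\mathbb{R}^n)$ (using local compactness) shows $J'_\infty(w)=0$, so $w\in\mathcal{M}_\infty$ and $J_\infty(w)\geq c_\infty$. Applying Fatou's Lemma to the nonnegative integrand $\tfrac12 f(t)t-F(t)$ (nonnegative by $(f_3')$) gives
$$
J_\infty(w)=\int_{\mathbb{R}^n}\Big(\tfrac12 f(w)w-F(w)\Big)\,dx\leq\liminf_k\int_{\mathbb{R}^n}\Big(\tfrac12 f(v_k)v_k-F(v_k)\Big)\,dx=c_\infty ,
$$
so $J_\infty(w)=c_\infty$. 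To obtain $v_k\to w$ strongly I would set $r_k=v_k-w$; the parallelogram identity for $\|\cdot\|_1$ together with a Brezis--Lieb splitting of $\int F$ and $\int f(\cdot)\,\cdot$ yields $J_\infty(r_k)\to 0$, $J'_\infty(r_k)\to 0$ and $\|r_k\|_1^2=\int f(r_k)r_k+o(1)$. A further application of Lemma \ref{lemCC} to $\{r_k\}$ then shows that $r_k$ cannot retain a non-vanishing concentration, since a surviving bump would produce, after a suitable translation, a nontrivial critical point of $J_\infty$ of energy at least $c_\infty$, contradicting $J_\infty(r_k)\to 0$; hence $r_k\to 0$ in $L^q$ and $\|r_k\|_1\to 0$, i.e. $v_k\to w$ in $H^\alpha_1$. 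Finally the alternative is decided by $\{y_k\}$: if $\{y_k\}$ is bounded then $u_k\to u=w(\cdot-y_0)$ along a subsequence $y_k\to y_0$, which is alternative I; if $\{y_k\}$ is unbounded we extract $|y_k|\to+\infty$ and are in alternative II.

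The main obstacle is this last strong--convergence step. The delicate points are to carry out the splitting consistently for the \emph{nonlocal} regional energy (the Gagliardo-type double integral over $B(0,1)$ must decompose as $\|v_k\|_1^2=\|w\|_1^2+\|r_k\|_1^2+o(1)$, which here reduces to weak convergence in a Hilbert space, while the genuinely nonlinear part relies on $(f_1)$--$(f_2)$) and, above all, to rule out that the remainder $r_k$ concentrates mass along a second sequence of centers escaping to infinity. This exclusion is precisely where the strict positivity of $c_\infty$, the autonomy (translation invariance) of $J_\infty$, and the energy estimate coming from $(f_3')$ are used together; once it is in place, both alternatives follow at once.
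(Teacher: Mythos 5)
Your proposal is correct, and up to the last step it runs parallel to the paper's proof: Ekeland's principle to replace $\{u_k\}$ by a Palais--Smale sequence on $\mathcal{M}_{\infty}$, boundedness as in Corollary~\ref{ltdalem}, exclusion of vanishing via Lemma~\ref{lemCC} using $c_{\infty}>0$, translation to a nontrivial weak limit, and Fatou's Lemma applied to the nonnegative integrand supplied by $(f_3')$ to pin the limit energy at $c_{\infty}$. You diverge precisely at the point you flag as the main obstacle. Because you write $J_{\infty}(w)=J_{\infty}(w)-\tfrac{1}{2}J'_{\infty}(w)w=\int(\tfrac{1}{2}f(w)w-F(w))\,dx$, the norm disappears from your Fatou chain, and you must recover strong convergence separately through a Brezis--Lieb splitting of the nonlocal quadratic form and of $\int F$ and $\int f(\cdot)\,\cdot$, plus a second concentration--compactness step to kill the remainder $r_k$; this is workable (the quadratic splitting is just Hilbert-space weak convergence, and the nonlinear splittings are standard under $(f_1)$--$(f_2)$ for bounded sequences), but it is the longest and most technical part of your argument. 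The paper avoids all of it by one small adjustment: it subtracts $\tfrac{1}{\theta}J'_{\infty}$ instead of $\tfrac{1}{2}J'_{\infty}$, so the chain $c_{\infty}\le J_{\infty}(u)=(\tfrac{1}{2}-\tfrac{1}{\theta})\|u\|^2+\int(\tfrac{1}{\theta}f(u)u-F(u))\,dx\le\liminf_{k}\{(\tfrac{1}{2}-\tfrac{1}{\theta})\|u_k\|^2+\int(\tfrac{1}{\theta}f(u_k)u_k-F(u_k))\,dx\}=c_{\infty}$ retains the coercive term $(\tfrac{1}{2}-\tfrac{1}{\theta})\|u_k\|^2$ (positive since $\theta>2$); weak lower semicontinuity of the norm together with Fatou for the remaining nonnegative integrand then forces $\|u_k\|^2\to\|u\|^2$, and weak convergence plus convergence of norms in a Hilbert space gives strong convergence at once, with no splitting lemmas and no second vanishing analysis. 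Your route is legitimate and closer in spirit to a profile decomposition; the paper's choice of the multiplier $\tfrac{1}{\theta}$ is the more economical device and is worth remembering.
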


\noindent \textbf{Proof.} Similarly to Corollary~\ref{ltdalem}, we can assume that $\{u_k\}$ is a bounded sequence, and so, there is $ u \in H^{\alpha}_1(\mathbb{R}^n) $ and a subsequence of $ \{ u_{k} \}$, still denoted by itself, such that $u_k  \rightharpoonup u $  in $ H^{\alpha}_1(\mathbb{R}^n)$. Applying the Ekeland's variational principle, there is a sequence $ \{w_{k}\} $ in $ \mathcal{M}_{\infty} $ with
\[
w_{k} = u_{k} + o_{k}(1), \quad J_{\infty}(w_{k}) \to c_{\infty}
\]
and
\begin{align} \label{eq1}
J'_{\infty}(w_{k}) - \tau_{k} E'_{\infty}(w_{k}) = o_{k}(1),
\end{align}
where $ (\tau_{k}) \subset \mathbb{R} $ and $ E_{\infty}(w) = J'_{\infty}(w)  w $, for any $ w \in H^{\alpha}_1(\mathbb{R}^n) $.

Since $ \{{ w_{k}}\} \subset \mathcal{M}_{\infty} $, (\ref{eq1}) leads to
\[
\tau_{k} E'_{\infty}(w_{k})  w_{k} = o_{k}(1).
\]
Gathering  $(f_4)$ and Lemma \ref{lemCC},  it is possible to prove that there is $\eta_1>0$ such that 
$$
E_\infty'(u)u \leq -\eta_1, \quad \forall u \in \mathcal{M}_{\infty}.
$$
From this, $ \tau_{k} \to 0 $ as $ k \to \infty $,   
\[
J_{\infty}(u_{k}) \to c_{\infty}  \,\,\, \mbox{and} \,\,\, J'_{\infty}(u_{k}) \to 0.
\]
Consequently, $ u $ is critical point of  $ J_{\infty} $.

Next, we will study the following  possibilities: $ u \neq 0 $ or $ u = 0 $.

\vspace{0.5 cm}

\noindent \textbf{Case 1:} $ u \neq 0 $.

\noindent By Fatou's Lemma , it is easy to check that
\begin{align*}
c_{\infty} \leq & J_{\infty}(u) = J_{\infty}(u) - \frac{1}{\theta} J'_{\infty}(u) u \\
= & \left(\frac{1}{2}-\frac{1}{\theta} \right)\|u\|^{2}+\int_{\mathbb{R}^N}(\frac{1}{\theta}f(u)u-F(u))dx\\
\leq & \liminf_{k \to \infty}\left\{ \left(\frac{1}{2}-\frac{1}{\theta} \right)\|u_k\|^{2}+\int_{\mathbb{R}^N}(\frac{1}{\theta}f(u_k)u_k-F(u_k))dx\right\}\\
\leq & \limsup_{k \to \infty}\left\{ \left(\frac{1}{2}-\frac{1}{\theta} \right)\|u_k\|^{2}+\int_{\mathbb{R}^N}(\frac{1}{\theta}f(u_k)u_k-F(u_k))dx\right\}\\
= & \lim_{k \to \infty} \left\{ J_{\infty}(u_{k}) - \frac{1}{\theta} J'_{\infty}(u_{k}) u_{k} \right\} = \, c_{\infty} .\\
\end{align*}
Hence,
$$
\|u_k\|^2 \to \|u\|^2 \quad \mbox{in} \quad  \mathbb{R},
$$
from where it follows that $ u_{k} \to u $ in $  H^{\alpha}_1(\mathbb{R}^n)$.

\vspace{0.5 cm}

\noindent \textbf{Case 2:} $ u = 0 $.

\vspace{0.5 cm}

In this case, we claim that there are $R, \xi>0$ and $ \{ y_{k} \} \subset \mathbb{R}^{n} $ satisfying
\begin{align}\label{lionsfalse}
\limsup_{k \to \infty} \int_{B_{R}(y_{k})} |u_{k}|^{2}dx \geq \xi.
\end{align}
If the claim is false, we must have
\begin{align*}
\limsup_{k \to \infty} \sup_{y \in \mathbb{R}^{n}} \int_{B_{R}(y)} |u_{k}|^{2}dx = 0.
\end{align*}
Thus, by Lemma \ref{lemCC}, 
$$
u_{ k} \to 0  \mbox{ in }  L^{p}(\mathbb{R}^{n}), \quad \forall p \in (2,2_{\alpha}^{*}).
$$
Recalling $ J'_{\infty}(u_{k}) u_{k} = o_{k}(1) $, the last limit yields
\[
\|u_k\|^{2} \to 0,
\]
or equivalently
$$
u_k \to 0 \,\,\, \mbox{in} \,\,\, H^{\alpha}_{1}(\mathbb{R}^{n}),
$$
leading to $c_{\infty} = 0 $, which is absurd. This way, (\ref{lionsfalse}) is true. 
Setting
$$
w_{k}(x) = u_{k}(x + {y}_{k}),
$$
we have that 
$$
J_{\infty}(w_{k}) = J_{\infty}(u_{k}) \,\,\, \mbox{and} \,\,\, \|J'_{\infty}(w_{k})\| = \|J'_{\infty}(u_{k})\|,
$$
that is, $ \{ w_{k} \} $ is a sequence $(PS)_{c_{\infty}} $ for $ J_{\infty} $.  If $ w \in  H^{\alpha}_1(\mathbb{R}^{n}) $ denotes the weak limit of $ \{ w_{n} \} $, it follows from (\ref{lionsfalse}),
$$
\int_{B_{{R}}(0)} |w|^{2}dx \geq \xi,
$$
and so, $ w \neq 0 $.

By repeating the same argument of the first case for the sequence $ \{w_{k}\} $, we deduce that $ w_{k} \to w $ in $  H^{\alpha}_1(\mathbb{R}^{n}) $, $ w \in \mathcal{M}_{\infty} $  and $ J_{\infty}(w) = c_{\infty} $.  \fim

\subsubsection{Estimates involving the minimax levels}

The main goal of this section is to prove some estimates involving the minimax levels $c_{\epsilon}$ and $c_{\infty}$. First of all, we recall the inequality
$$
J_{\infty}(u) \leq J_{\epsilon}(u) \,\,\,\,\, \forall u \in H^{\alpha}_{\rho}(\mathbb{R}^{n}),
$$
which implies
\[
c_{\infty} \leq c_{\epsilon}, \quad \forall \epsilon>0.
\]

\begin{lemma}\label{c0<cf00}
The minimax levels $c_{\epsilon}$ and $c_{\rho_{\infty}}$ satisfy the inequality \linebreak $c_{\epsilon} < c_{\rho_{\infty}}$. Hence, $c_{\infty} < c_{\rho_{\infty}}$.
\end{lemma}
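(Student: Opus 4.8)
The plan is to use a ground state of the limit problem $(P_{\rho_\infty})$ as a test function for $J_\epsilon$ and to exploit the strict pointwise inequality $\rho_\epsilon(x)=\rho(\epsilon x)<\rho_\infty$ guaranteed by $(\rho_5)$. Since $(P_{\rho_\infty})$ has constant scope, the translation-invariant compactness argument (the analogue of Theorem \ref{TeoComp} with $B(0,1)$ replaced by $B(0,\rho_\infty)$) provides a ground state $w\in\mathcal{M}_{\rho_\infty}$ with $J_{\rho_\infty}(w)=c_{\rho_\infty}$. Because $(f_4)$ holds, $c_{\rho_\infty}$ is the mountain pass level and the fibering map $t\mapsto J_{\rho_\infty}(tw)$ attains its maximum over $t\geq 0$ exactly at $t=1$, so that $c_{\rho_\infty}=\max_{t\geq 0}J_{\rho_\infty}(tw)$.

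Next I would project $w$ onto the Nehari manifold $\mathcal{N}_\epsilon$. Using $(f_4)$ again, there is a unique $t_\epsilon>0$ with $t_\epsilon w\in\mathcal{N}_\epsilon$ and $J_\epsilon(t_\epsilon w)=\max_{t\geq 0}J_\epsilon(tw)$, so in particular $c_\epsilon\leq J_\epsilon(t_\epsilon w)$. The two functionals differ only in the scope of the inner integral, and since $(\rho_5)$ makes the annulus $\{z:\rho_\epsilon(x)<|z|<\rho_\infty\}$ nonempty for every $x$,
$$
J_{\rho_\infty}(u)-J_\epsilon(u)=\frac{1}{2}\int_{\mathbb{R}^n}\int_{\{\rho_\epsilon(x)<|z|<\rho_\infty\}}\frac{|u(x+z)-u(x)|^2}{|z|^{n+2\alpha}}\,dz\,dx\geq 0 .
$$
Applying this with $u=t_\epsilon w$ and combining with $J_{\rho_\infty}(t_\epsilon w)\leq\max_{t\geq 0}J_{\rho_\infty}(tw)=c_{\rho_\infty}$ yields
$$
c_\epsilon\leq J_\epsilon(t_\epsilon w)=J_{\rho_\infty}(t_\epsilon w)-\frac{t_\epsilon^2}{2}A\leq c_{\rho_\infty}-\frac{t_\epsilon^2}{2}A,
$$
where $A=\int_{\mathbb{R}^n}\int_{\{\rho_\epsilon(x)<|z|<\rho_\infty\}}\frac{|w(x+z)-w(x)|^2}{|z|^{n+2\alpha}}\,dz\,dx$.

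The main obstacle is to show that $A>0$; since $t_\epsilon>0$, this immediately forces the strict inequality $c_\epsilon<c_{\rho_\infty}$. I would argue by contradiction: if $A=0$, then $w(x+z)=w(x)$ for a.e.\ $x$ and a.e.\ $z$ in the annulus $\{\rho_\epsilon(x)<|z|<\rho_\infty\}$. Restricting $x$ to a compact set $K$, continuity of $\rho_\epsilon$ gives $\sup_K\rho_\epsilon<\rho_\infty$, so a fixed annulus $S_K$ of positive thickness lies in every such annulus for $x\in K$; hence there is a full-measure set $G\subset S_K$ of vectors $z$ with $w(\cdot+z)=w(\cdot)$ a.e.\ on $K$. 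By a Steinhaus-type argument the difference set $G-G$ contains a ball around the origin, and composing two such translations shows $w(\cdot+v)=w(\cdot)$ a.e.\ for all $v$ in a neighborhood of $0$; thus $w$ is a.e.\ constant, and being in $L^2(\mathbb{R}^n)$ it must vanish, contradicting $w\neq 0$. Therefore $A>0$ and $c_\epsilon<c_{\rho_\infty}$. Finally, the inequality $c_\infty\leq c_\epsilon$ recalled just before the statement (which comes from $J_\infty\leq J_\epsilon$, itself a consequence of $1=\inf\rho\leq\rho_\epsilon$) gives $c_\infty\leq c_\epsilon<c_{\rho_\infty}$, which is the asserted strict inequality $c_\infty<c_{\rho_\infty}$.
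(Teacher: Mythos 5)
Your proposal follows essentially the same route as the paper: take a ground state $U$ of the constant-scope problem $(P_{\rho_\infty})$, project it onto $\mathcal{M}_\epsilon$ via $(f_4)$, and compare $J_\epsilon(tU)$ with $J_{\rho_\infty}(tU)\le\max_{s\ge0}J_{\rho_\infty}(sU)=c_{\rho_\infty}$ using $\rho_\epsilon(x)<\rho_\infty$ from $(\rho_5)$. The only difference is that you explicitly justify the strict inequality by proving the annulus term $A>0$ (via the Steinhaus-type argument showing $A=0$ would force $U$ to be a.e.\ constant, hence zero), a point the paper asserts without comment; this is a legitimate and welcome filling-in of a detail rather than a different method.
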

\noindent \textbf{Proof.}
In a manner analogous to Theorem~\ref{TeoComp}, there is $ U \in H^{\alpha}_{\rho}(\mathbb{R}^{n}) $ such that
\[
J_{\rho_{\infty}}(U) = c_{\rho_{\infty}} \quad \mbox{ and } \quad J'_{\rho_{\infty}}(U) = 0.
\]
In the sequel, let $ t > 0 $ be satisfy $ t U \in \mathcal{M}_{\epsilon} $. Thereby,
$$
c_{\epsilon} \leq  J_{\epsilon}(tU).
$$
Since that by $(\rho_5)$, $\rho_{\infty} > \rho(x)$ for all $x \in \mathbb{R}^{n}$, we derive
$$
c_{\epsilon} <  J_{\rho_{\infty}}(tU) \leq \max_{s \geq 0}J_{\rho_{\infty}}(sU) = J_{\rho_{\infty}}(U) = c_{\rho_{\infty}}.
$$
\fim

\vspace{0.5 cm}

Using the last lemma, we are able to prove that  $ J_{\epsilon} $  verifies the $(PS)_{d}$ condition for some values of $d$.

\begin{lemma}\label{Cond-PS}
The functional $ J_{\epsilon} $ satisfies the $(PS)_{d}$ condition for \linebreak $ d \leq c_{\infty} + \tau $, where $\tau=\frac{1}{2}(c_{\rho_\infty}-c_\infty)>0$.
\end{lemma}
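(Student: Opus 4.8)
The plan is to run the standard threshold argument for a Palais--Smale sequence lying below the first energy level at which mass can escape to infinity, adapted to the variable-scope regional operator. Let $\{u_k\} \subset H^{\alpha}_{\rho}(\mathbb{R}^n)$ satisfy $J_\epsilon(u_k) \to d$ and $J'_\epsilon(u_k) \to 0$ with $d \leq c_\infty + \tau$. First I would establish boundedness: combining $(f_3')$ with the identity $J_\epsilon(u_k) - \frac{1}{\theta}J'_\epsilon(u_k)u_k = (\frac{1}{2}-\frac{1}{\theta})\|u_k\|^2 + \int_{\mathbb{R}^n}(\frac{1}{\theta}f(u_k)u_k - F(u_k))\,dx$ and the fact that the integrand is nonnegative, the left-hand side is bounded above by $d + o(1) + o(1)\|u_k\|$, which forces $\{u_k\}$ to be bounded because $\theta > 2$. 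Passing to a subsequence, $u_k \rightharpoonup u$ in $H^{\alpha}_{\rho}(\mathbb{R}^n)$; testing $J'_\epsilon(u_k)\varphi \to 0$ against $\varphi \in C_0^\infty(\mathbb{R}^n)$ and using the local compactness of the embedding (cf. Lemma \ref{lemCC} and the remarks of Section~2) yields $J'_\epsilon(u) = 0$. In particular $u$ lies on $\mathcal{N}_\epsilon$ or is zero, and in both cases the same identity gives $J_\epsilon(u) \geq 0$.

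Next I would split off the weak limit. Setting $v_k = u_k - u \rightharpoonup 0$, I would prove a Brezis--Lieb type decomposition for $J_\epsilon$ and its derivative,
\[
J_\epsilon(u_k) = J_\epsilon(u) + J_\epsilon(v_k) + o(1), \qquad J'_\epsilon(u_k) = J'_\epsilon(u) + J'_\epsilon(v_k) + o(1),
\]
the second identity holding in $(H^{\alpha}_{\rho}(\mathbb{R}^n))'$. For the quadratic part this is the Hilbert-space identity, the only new point being the Gagliardo-type double integral over $B(0,\rho_\epsilon(x))$, which splits via the a.e. convergence of $v_k$ and the Brezis--Lieb lemma since $\rho$ is bounded; for the nonlinear part one uses $(f_1)$--$(f_2)$ and the usual Brezis--Lieb argument. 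Consequently $\{v_k\}$ is a Palais--Smale sequence for $J_\epsilon$ at the level $d' := d - J_\epsilon(u) \leq d$, and $J_\epsilon(v_k) \to d'$.

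I would then apply the vanishing dichotomy of Lemma \ref{lemCC} to $\{v_k\}$. If $\sup_{y}\int_{B(y,R)}|v_k|^2\,dx \to 0$, then $v_k \to 0$ in $L^q(\mathbb{R}^n)$ for all $q \in (2, 2_\alpha^*)$; feeding this into $J'_\epsilon(v_k)v_k = o(1)$ and using $(f_1)$--$(f_2)$ kills the nonlinear term and forces $\|v_k\| \to 0$, i.e. $u_k \to u$ strongly, which is the desired conclusion. Otherwise there exist $R,\eta>0$ and $\{y_k\}\subset \mathbb{R}^n$ with $\int_{B(y_k,R)}|v_k|^2\,dx \geq \eta$; since $v_k \rightharpoonup 0$ and $v_k \to 0$ in $L^2_{loc}$, necessarily $|y_k| \to +\infty$. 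The translated sequence $w_k := v_k(\cdot + y_k)$ is bounded and, up to a subsequence, $w_k \rightharpoonup w \neq 0$. Here $(\rho_5)$ enters: because $|y_k|\to\infty$ we have $\rho(\epsilon(x+y_k)) \to \rho_\infty$ locally uniformly, so passing to the limit in $J'_\epsilon(v_k)(\varphi(\cdot - y_k)) = o(1)$ shows that $w$ is a nontrivial critical point of the limiting functional $J_{\rho_\infty}$; hence $J_{\rho_\infty}(w) \geq c_{\rho_\infty}$.

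Finally I would close by energy accounting. Applying weak lower semicontinuity of the $\rho_\epsilon(\cdot+y_k)$-seminorms together with their convergence to the $\rho_\infty$-seminorm, $(f_3')$ in the form $\frac{1}{\theta}f(t)t - F(t) \geq 0$, and Fatou's lemma to $J_\epsilon(v_k) - \frac{1}{\theta}J'_\epsilon(v_k)v_k$, one gets $d' \geq J_{\rho_\infty}(w) - \frac{1}{\theta}J'_{\rho_\infty}(w)w = J_{\rho_\infty}(w) \geq c_{\rho_\infty}$, whence $d = J_\epsilon(u) + d' \geq c_{\rho_\infty}$. But by hypothesis and Lemma \ref{c0<cf00}, $d \leq c_\infty + \tau = \tfrac{1}{2}(c_\infty + c_{\rho_\infty}) < c_{\rho_\infty}$, a contradiction. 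Therefore vanishing must occur and $u_k \to u$ strongly in $H^{\alpha}_{\rho}(\mathbb{R}^n)$, establishing the $(PS)_d$ condition. The main obstacle I anticipate is the rigorous Brezis--Lieb splitting of the variable-scope regional seminorm and the careful passage to the limit identifying $w$ as a solution of the $\rho_\infty$-problem; the threshold comparison and the remaining bookkeeping are then routine.
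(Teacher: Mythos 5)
Your proposal follows essentially the same route as the paper's proof: boundedness of the $(PS)_d$ sequence, weak convergence to a critical point $u$ with $J_\epsilon(u)\geq 0$, a Brezis--Lieb splitting so that $w_k=u_k-u$ is a $(PS)_{d-J_\epsilon(u)}$ sequence, and then ruling out non-vanishing by translating along the escaping centers, identifying the limit as a nontrivial critical point of $J_{\rho_\infty}$, and deriving the contradiction $c_{\rho_\infty}\leq c_\infty+\tau$ from Fatou's lemma and Lemma~\ref{c0<cf00}. The argument is correct and matches the paper's, including the points both treat somewhat informally (the splitting of the variable-scope seminorm and the passage to the $\rho_\infty$-limit).
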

\begin{proof}
Let $ \{v_{k}\} \subset H^{\alpha}_{\rho}(\mathbb{R}^{n}) $ be a $(PS)_{d}$ sequence for functional $ J_{\epsilon} $ with $ d \leq c_{\infty} + \tau$. Similarly to Corollary~\ref{ltdalem}, $ \{v_{k}\} $ is a bounded sequence in $  H^{\alpha}_{\rho}(\mathbb{R}^{n}) $, and so, for some subsequence, still denoted by $ \{v_{k}\} $,
\[
v_{k} \rightharpoonup v \mbox{ in } H^{\alpha}_{\rho}(\mathbb{R}^{n}),
\]
for some $v \in H^{\alpha}_{\rho}(\mathbb{R}^{n}).$ Now, by using standard arguments, it is possible to prove that
\begin{align}
J_{\epsilon} (v_{k}) - J_{\epsilon}(w_{k}) - J_{\epsilon}(v) = o_{k}(1) \label{J-J0-on1}
\end{align}
and
\begin{align}
\Vert J'_{\epsilon} (v_{k}) - J'_{\epsilon} (w_{k}) - J'_{\epsilon}(v) \Vert = o_{k}(1), \label{J'-J0'-on1}
\end{align}
where $ w_{k} = v_{k} - v $. Since $ J'_{\epsilon}(v) = 0 $ and  $ J_{\epsilon}(v) \geq 0 $, from (\ref{J-J0-on1})-(\ref{J'-J0'-on1}), $\{w_{k}\}$ is a $(PS)_{d^{*}}$ sequence for $J_{\epsilon}$ with  $d^*=d - J_{\epsilon}(v) \leq  c_{\infty} +\tau$.

\bigskip

\begin{claim} \label{C2} There is $ R > 0 $ such that
\[
\limsup_{k \to \infty} \sup_{y \in \mathbb{R}^{n}} \int_{B_{R}(y)}|w_{k}|^{2} dx= 0.
\]
\end{claim}
If the claim is true, we have
\[
\int_{\mathbb{R}^n} f(w_{k})w_k \ dx\to 0.
\]
On the other hand, by (\ref{J'-J0'-on1}),  we know that $ J'_{\epsilon}(w_{k}) = o_{k}(1) $, then
$$
\|w_k\|^{2} \to 0
$$
that is, $ w_{k} \to 0 $ in $ H^{\alpha}_{\rho}(\mathbb{R}^{n}) $, and so, $ v_{k} \to v $ in $H^{\alpha}_{\rho}(\mathbb{R}^{n}). $

\bigskip

\noindent\textbf{Proof of Claim \ref{C2}:}
If the claim is not true, for each $ R > 0 $ given, we find $ \xi > 0 $ and $ \{y_k\} \subset \mathbb{R}^{n} $  such that
\[
\limsup_{k \to \infty}\int_{B_{R}(y_{k})} |w_{k}|^{2} \geq \xi > 0.
\]
Using that $w_k \rightharpoonup 0$ in $ H^{\alpha}_{\rho}(\mathbb{R}^{n}) $, it follows that $\{y_k\}$ is an unbounded sequence.  Setting
\[
\tilde{w}_{k} = w_{k}(\cdot + y_{k}),
\]
we have that $ \{\tilde{w}_{k}\} $ is bounded in $H^{\alpha}_{\rho}(\mathbb{R}^{n}$) Thus, there are $ \tilde{w} \in H^{\alpha}_{\rho}(\mathbb{R}^{n}) \setminus\{0\} $ and a subsequence of $ \{\tilde{w}_{n}\} $, still denoted by itself,  such that
$$
\tilde{w}_{k} \rightharpoonup \tilde{w} \in H^{\alpha}_{\rho}(\mathbb{R}^{n} ).
$$
Moreover, since $ J'_{\epsilon}(w_{k}) \phi( \cdot - y_{k}) = o_{k}(1) $ for each $ \phi \in  H^{\alpha}_{\rho}(\mathbb{R}^{n})$, we obtain
$$
\begin{aligned}
 0&= \int_{\mathbb{R}^n}\int_{B(0,\rho_\infty )}\frac{[\tilde{w}(x+z)-\tilde{w}(x)][\phi(x+z)-\phi(x)]}{|z|^{n+2\alpha}} + \int_{\mathbb{R}^n}\tilde{w}(x)\phi(x)dx\\
& - \int_{\mathbb{R}^n}f(\tilde{w}(x))\phi(x)dx,
\end{aligned}
$$
from where it follows that $ \tilde{w} $ is a weak solution of $(P_{\rho_\infty})$. Consequently, after some routine calculations, 
$$
c_{\rho_{\infty}}  \leq J_{\rho_{\infty}}(\tilde{w}) = J_{\rho_{\infty}}(\tilde{w}) - \frac{1}{\theta} J'_{\rho_{\infty}}(\tilde{w})\tilde{w} \leq \liminf_{k \to \infty} \left\{J_{\epsilon}(w_{n}) - \frac{1}{\theta} J'_{\epsilon}(w_{n})  w_{n}\right\} = d^*,
$$
that is, $c_{\rho_{\infty}}  \leq c_{\infty} + \tau$, which is an absurd because $\tau < c_{\rho_{\infty}}  - c_{\infty}$. Therefore, the Claim \ref{C2} is true.
\end{proof}

In what follows, let us fix $ \gamma_{0}, r_{0} > 0 $ such that 
\begin{itemize}
  \item $ \overline{B_{\gamma_{0}}(a_{i})} \cap \overline{B_{\gamma_{0}}(a_{j})} = \emptyset $ for $ i \neq j$ \,\,\, \mbox{and} \,\,\, $i,j \in \{1,...,\ell\}$
  \item $ \bigcup^{\ell}_{i = 1}B_{\gamma_{0}}(a_{i}) \subset B_{r_0}(0) $.

  \item $K_{\frac{\gamma_{0}}{2}} = \bigcup^{\ell}_{i = 1}\overline{B_{\frac{\gamma_{0}}{2}}(a_{i})}$
\end{itemize}
Besides this, we define the function $ Q_{ \epsilon} : H^{\alpha}_\rho(\mathbb{R}^{n}) \to \mathbb{R}^n $ by
\begin{align*}
Q_{\epsilon}(u) = \frac{\int_{\mathbb{R}^n} \chi (\epsilon x)|u|^{2}dx}{\int_{\mathbb{R}^n} |u|^{2}dx},
\end{align*}
where  $ \chi : \mathbb{R}^{n}  \to \mathbb{R}^{n} $ is given by
\[
\chi(x) = \left\{
\begin{array}{ccc}
  x & \mbox{if} & |x| \leq r_{0} \\
  r_{0} \frac{x}{|x|} & \mbox{if} & |x| > r_{0}.
\end{array}
\right.
\]

The next two lemmas will be useful to get important $(PS)$-sequences associated with $ J_{\epsilon} $.

\begin{lemma}\label{lemK}
There are $ \delta_{0} > 0 $ and $ \epsilon_1 >0 $ such that if $ u \in \mathcal{M}_{\epsilon} $ and $ J_{\epsilon}(u) \leq c_{\infty} + \delta_{0} $, then
\[
Q_{\epsilon}(u) \in K_{\frac{\gamma_{0}}{2}} \,\,\,\, \mbox{for} \,\,\, \epsilon \in (0, \epsilon_1).
\]
\end{lemma}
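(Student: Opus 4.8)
The plan is to argue by contradiction. If the conclusion fails, then taking $\delta_0=\epsilon_1=1/k$ produces $\epsilon_k\to 0$ and $u_k\in\mathcal{M}_{\epsilon_k}$ with $J_{\epsilon_k}(u_k)\le c_\infty+1/k$ and $Q_{\epsilon_k}(u_k)\notin K_{\gamma_0/2}$. Since $c_\infty\le c_{\epsilon_k}\le J_{\epsilon_k}(u_k)$, this forces $J_{\epsilon_k}(u_k)\to c_\infty$, and Corollary~\ref{ltdalem} shows $\{u_k\}$ is bounded. Arguing as in the proof of Theorem~\ref{TeoComp} (Ekeland's principle on $\mathcal{M}_{\epsilon_k}$ together with a bound $E'_{\epsilon}(u)u\le-\eta_1<0$ that is uniform in $\epsilon$), I may assume in addition that $J'_{\epsilon_k}(u_k)\to 0$; because $\|u_k\|$ stays bounded away from $0$ (as $J_{\epsilon_k}(u_k)\to c_\infty>0$) and $Q_{\epsilon_k}$ is continuous, the modified sequence satisfies $Q_{\epsilon_k}(\tilde u_k)=Q_{\epsilon_k}(u_k)+o_k(1)$, so it suffices to reach a contradiction for the modified sequence.

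Next I would localize the mass. Vanishing cannot occur: if $\sup_{y}\int_{B_R(y)}|u_k|^2\,dx\to0$, then Lemma~\ref{lemCC} gives $u_k\to0$ in $L^q$, whence $\int f(u_k)u_k\,dx\to0$; but $u_k\in\mathcal{M}_{\epsilon_k}$ then forces $\|u_k\|\to0$ and $J_{\epsilon_k}(u_k)\to0\ne c_\infty$. Hence there are $R,\eta>0$ and $\tilde y_k\in\mathbb{R}^n$ with $\int_{B_R(\tilde y_k)}|u_k|^2\,dx\ge\eta$, and setting $w_k=u_k(\cdot+\tilde y_k)$ we get (up to a subsequence) $w_k\rightharpoonup w\ne0$. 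The scope of $w_k$ is $\tilde\rho_k(x)=\rho(\epsilon_k x+\epsilon_k\tilde y_k)$. If $|\epsilon_k\tilde y_k|\to\infty$ then $(\rho_5)$ gives $\tilde\rho_k\to\rho_\infty$ locally uniformly, while if $\epsilon_k\tilde y_k\to y_0$ then $\tilde\rho_k\to\rho(y_0)$; writing $b$ for the limiting constant scope and $J_b$ for the associated functional, I would pass to the limit in $J'_{\epsilon_k}(u_k)\,\varphi(\cdot-\tilde y_k)=o_k(1)$, for $\varphi\in C_0^\infty(\mathbb{R}^n)$, to conclude that $w$ is a nontrivial critical point of $J_b$, i.e. $w\in\mathcal{M}_b$.

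To identify $b$, I would compare energies. Using $u_k\in\mathcal{M}_{\epsilon_k}$ and $(f_3')$,
\[
J_{\epsilon_k}(u_k)=\Big(\tfrac12-\tfrac1\theta\Big)\|u_k\|^2+\int_{\mathbb{R}^n}\Big(\tfrac1\theta f(u_k)u_k-F(u_k)\Big)dx,
\]
where both summands are nonnegative. Translating by $\tilde y_k$ and applying Fatou's Lemma — writing the inner integral over $B(0,\tilde\rho_k(x))$ as an integral over $\mathbb{R}^n$ against $\mathbf{1}_{\{|z|<\tilde\rho_k(x)\}}$, which converges a.e.\ since the limiting sphere $\{|z|=b\}$ is null — I obtain
\[
c_\infty=\lim_k J_{\epsilon_k}(u_k)\ge J_b(w)-\tfrac1\theta J'_b(w)w=J_b(w)\ge c_b.
\]
Because the mountain-pass characterization under $(f_4)$ gives $c_\infty=c_1\le c_b$ for every $b\ge1$, this forces $c_b=J_b(w)=c_\infty$. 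If $b>1$, choosing $t_w>0$ with $t_w w\in\mathcal{M}_1$ would give $c_\infty\le J_1(t_w w)<J_b(t_w w)\le\max_{s>0}J_b(sw)=J_b(w)=c_\infty$, the strict inequality coming from $B(0,1)\subsetneq B(0,b)$ and $w\ne0$; this is impossible. Hence $b=1$, the case $|\epsilon_k\tilde y_k|\to\infty$ (where $b=\rho_\infty>1$, also contradicting Lemma~\ref{c0<cf00}) is excluded, and $\rho(y_0)=1$, so $y_0=a_i$ for some $i$ by $(\rho_6)$.

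Finally, the chain of inequalities above must be a chain of equalities, so each nonnegative term passes to its limit; in particular $\|w_k\|_{L^2}=\|u_k\|_{L^2}\to\|w\|_{L^2}$, which together with $w_k\rightharpoonup w$ yields $w_k\to w$ strongly in $L^2(\mathbb{R}^n)$. Changing variables,
\[
Q_{\epsilon_k}(u_k)=\frac{\int_{\mathbb{R}^n}\chi(\epsilon_k x+\epsilon_k\tilde y_k)|w_k|^2\,dx}{\int_{\mathbb{R}^n}|w_k|^2\,dx}.
\]
Since $a_i\in B_{\gamma_0}(a_i)\subset B_{r_0}(0)$ we have $|a_i|<r_0$, so $\chi(\epsilon_k x+\epsilon_k\tilde y_k)\to\chi(a_i)=a_i$ boundedly, while $|w_k|^2\to|w|^2$ in $L^1(\mathbb{R}^n)$; therefore $Q_{\epsilon_k}(u_k)\to a_i\in B_{\gamma_0/2}(a_i)\subset K_{\gamma_0/2}$, contradicting $Q_{\epsilon_k}(u_k)\notin K_{\gamma_0/2}$, which proves the lemma. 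The main obstacle is the treatment of the double-integral (seminorm) terms with the $k$- and $x$-dependent integration region $B(0,\tilde\rho_k(x))$: both the passage to the limit in the Euler--Lagrange identity for $w$ and the Fatou lower-semicontinuity estimate require controlling this moving domain, and this is where the genuine work lies; the uniform-in-$\epsilon$ reduction to a $(PS)$ sequence is the other delicate point.
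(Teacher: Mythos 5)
Your argument is correct in outline and ends exactly where the paper's does (the trichotomy $u_k\to U$, $|\epsilon_k y_k|\to\infty$, $\epsilon_k y_k\to y$, the comparison of $c_\infty$ with $c_{\rho_\infty}$ and $c_{\rho(y)}$ via Lemma~\ref{c0<cf00}, and the computation $Q_{\epsilon_k}(u_k)\to a_i$), but you reach the compactness needed for that endgame by a genuinely different route. The paper's trick is to fix $s_k>0$ with $s_ku_k\in\mathcal{M}_\infty$ and use the chain $c_\infty\le J_\infty(s_ku_k)\le J_{\epsilon_k}(s_ku_k)\le\max_{t\ge0}J_{\epsilon_k}(tu_k)=J_{\epsilon_k}(u_k)\le c_\infty+\delta_k$, which shows $\{s_ku_k\}$ is a minimizing sequence on the \emph{fixed} Nehari manifold $\mathcal{M}_\infty$; Ekeland and the already-proved compactness result (Theorem~\ref{TeoComp}) then deliver strong convergence of $s_ku_k$ (up to translation) for free, and $s_k\to s_0>0$ transfers this back to $u_k$. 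You instead run Ekeland on the varying manifolds $\mathcal{M}_{\epsilon_k}$, extract a weak limit of translates, identify the limiting constant-scope problem $J_b$, and recover norm (hence strong $L^2$) convergence from equality in a Fatou chain. Your route works, but it forces you to redo inline essentially all of Theorem~\ref{TeoComp} in an $\epsilon$-dependent setting: you must justify a Lagrange-multiplier bound $E'_{\epsilon}(u)u\le-\eta_1$ that is uniform in $\epsilon$, the weak lower semicontinuity of the seminorm over the moving regions $B(0,\rho(\epsilon_kx+\epsilon_ky_k))$, and the passage to the limit in the Euler--Lagrange identity with moving scope --- precisely the points you flag as ``where the genuine work lies.'' The paper's projection onto $\mathcal{M}_\infty$ exists to avoid all of this: the only $\epsilon$-dependent ingredient it retains is the Nehari identity $J'_{\epsilon_k}(u_k)u_k=0$, used \emph{after} strong convergence is already in hand, where the moving-domain limits are harmless. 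So your proof buys independence from Theorem~\ref{TeoComp} at the cost of substantially more unwritten analysis; the paper's buys brevity by leaning on the autonomous compactness theorem. One small point to tighten if you keep your version: when you replace $u_k$ by the Ekeland-corrected sequence you need $\|u_k\|_{L^2}$ (not just the full norm) bounded away from zero to control $Q_{\epsilon_k}$; this follows from $\|u_k\|_{L^q}\ge\beta'>0$ on the Nehari manifold together with interpolation and the uniform bound on $\|u_k\|$, but it should be said.
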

\noindent \textbf{Proof.}
If the lemma does not occur, there must be $ \delta_{k} \to 0 $, $ \epsilon_{k} \to 0 $ and $ u_{k} \in \mathcal{M}_{\epsilon} $ such that
\[
J_{\epsilon_k}(u_{k}) \leq c_{\infty} + \delta_{k}
\]
and
\[
Q_{{ \epsilon_{k}}}(u_{k}) \not\in K_{\frac{\gamma_{0}}{2}}.
\]
Fixing $ s_{k} > 0 $ such that $ s_{k} u_{k} \in \mathcal{M}_{\infty} $, we have
\[
c_{\infty} \leq J_{\infty}(s_{k} u_{k}) \leq J_{\epsilon_{k}} (s_{k}u_{k}) \leq \max_{t \geq 0 } J_{\epsilon_{k}} (tu_{k}) = J_{\epsilon_{k}}(u_{k}) \leq c_{\infty} + \delta_{k}.
\]
Hence,
\[\{s_{k} u_{k}\} \subset \mathcal{M}_{\infty} \,\,\,\, \mbox{and} \,\,\,\, J_{\infty}(s_{k} u_{k}) \to c_{\infty}.
\]

Applying the Ekeland's variational principle, we can assume without loss of generality that
$\{s_{k} u_{k}\} \subset \mathcal{M}_{\infty} $  is a sequence $ (PS)_{c_{\infty}} $ for $ J_{\infty} $, that is,
$$
J_\infty(s_k u_k) \to c_\infty \,\,\,\, \mbox{and} \,\,\,\, J'_{\infty}(s_k u_k) \to 0.
$$
According to Theorem \ref{TeoComp}, we must consider the following cases:
\begin{description}
  \item[i)] $ s_{k}u_{k} \to U \neq 0 $ in $ H^{\alpha}_\rho(\mathbb{R}^{n}) $; \par
\end{description}
or
\begin{description}
\item[ii)] There exists $ \{y_{k}\} \subset \mathbb{Z}^{n} $ with $|y_k| \to +\infty $ such that $ v_{k} = s_{k}u_k(\cdot + y_{k}) $ is convergent in $  H^{\alpha}_\rho(\mathbb{R}^{n}) $ for some $ V \in   H^{\alpha}_\rho(\mathbb{R}^{n}) \setminus \{0\}$.
\end{description}

By a direct computation, we can suppose that $ s_{k} \to s_{0} $ for some $ s_{0} > 0 $. Therefore, without loss of generality,
we can assume that
\begin{equation} \label {LIMIT}
u_{k} \to U  \,\,\, \mbox{or} \,\,\,\, v_{k} = u_k( \,\, \cdot + y_{k}) \to  V \,\,\,\, \mbox{in} \,\,\,   H^{\alpha}_\rho(\mathbb{R}^{n}).
\end{equation}

\bigskip

\noindent\textbf{Analysis of} $\mathbf{i)}$.

\bigskip

By Lebesgue's dominated convergence theorem
\[
Q_{\epsilon_{k}}(u_{k}) = \frac{\int_{\mathbb{R}^n} \chi({\epsilon_{k}} x)|u_{k}|^{2}dx}{\int_{\mathbb{R}^n} |u_{k}|^{2}dx} \to \frac{\int_{\mathbb{R}^n} \chi(0)|U|^{2}dx}{\int_{\mathbb{R}^n} |U|^{2}dx} = 0 \in K_{\frac{\gamma_{0}}{2}},
\]
leading to $ Q_{\epsilon_{k}}(u_{k}) \in K_{\frac{\gamma_{0}}{2}} $ for $k $ large, which is absurd.

\bigskip

\noindent\textbf{Analysis of} $\mathbf{ii)}$. \\

\noindent From the equality $J'_{\epsilon_{k}}(u_{k})(u_k) =0 $, we see that 
$$
\begin{aligned}
0&= \int_{\mathbb{R}^n}\int_{B(0,\rho(\epsilon_k x+\epsilon_ky_k) )}\frac{[{v_k}(x+z)-{v_k}(x)]^{2}}{|z|^{n+2\alpha}}dx + \int_{\mathbb{R}^n}|v_k|^{2}(x)dx\\
& - \int_{\mathbb{R}^n}f(\tilde{w}(x))\phi(x)dx,
\end{aligned}
$$

Now, we will study two cases:
\begin{description}
  \item[I)] $ |\epsilon_{k}y_{k}| \to +\infty $
\end{description}
and
\begin{description}
  \item[II)] $ \epsilon_{k}y_{k} \to y $, for some $y \in \mathbb{R}^{n}$.
\end{description}

If I) holds, the limit (\ref{LIMIT}) gives 
$$
\int_{\mathbb{R}^n}\int_{B(0,\rho_\infty )}\frac{[V(x+z)-V(x)]^{2}}{|z|^{n+2\alpha}}dx + \int_{\mathbb{R}^n}|V|^{2}dx - \int_{\mathbb{R}^n}f(V)(x))V(x)dx=0,
$$
and so, $V \in  \mathcal{M}_\infty$. Thereby, 
$$
c_{\rho_{\infty}} \leq J_{\rho_{\infty}}(V)  = J_{\rho_{\infty}}(V) - \frac{1}{\theta}J'_{\rho_{\infty}}(V)V \leq \liminf_{k \to \infty} \left\{  J_{\infty}(u_{k}) - \frac{1}{\theta}J'_{\infty}(u_{k})u_{k}  \right\} = c_{\infty},
$$
that is, $ c_{\rho_{\infty}} \leq c_{\infty} $,  which contradicts Lemma~\ref{c0<cf00}.

\bigskip



Now, if  $ \epsilon_{k}y_{k} \to y $ for some $y \in \mathbb{R}^{n}$, arguing as above we get 
\begin{align}
c_{\rho(y)} \leq c_{\infty}, \label{eq6}
\end{align}
where $ c_{\rho(y)} $ the mountain pass level of the functional  $ J_{\rho(y)} : H^{\alpha}_\rho(\mathbb{R}^{n}) \to \mathbb{R} $ given by
$$
J_{\rho(y)}(u) = \frac{1}{2}\left( \int_{\mathbb{R}^n}\int_{B(0,\rho(y))} \frac{|u(x+z) - u(x)|^2}{|z|^{n+2\alpha}}dzdx + \int_{\mathbb{R}^n}|u(x)|^2dx \right) - \int_{\mathbb{R}^n}F(u(x))dx.
$$
Observe that
$$
c_{\rho(y)} = \inf_{u \in \mathcal{M}_{\rho(y)}} J_{\rho(y)}(u)
$$
where
$$
\mathcal{M}_{\rho(y)} = \left\{ u \in H^{\alpha}_\rho(\mathbb{R}^{N})\setminus\{ 0\}: J'_{\rho(y)}(u) u = 0 \right\}.
$$
If $ \rho(y) >1 $, a similar argument explored in the proof of Lemma~\ref{c0<cf00} shows that $ c_{\rho(y)} > c_{\infty} $, which contradicts the inequality (\ref{eq6}). Therefore, $ \rho(y) = 1$ and $ y = a_{i} $ for some $ i = 1, \cdots \ell $.
Consequently,
\begin{align*}
Q_{\epsilon_{k}}(u_{k})  &=  \frac{\int_{\mathbb{R}^n} \chi(\epsilon_{k} x)|u_{k}|^{2}dx}{\int_{\mathbb{R}^n} |u_{k}|^{2}dx}\\
 & =  \frac{\int_{\mathbb{R}^n} \chi(\epsilon_{k} x + \epsilon_{k} y_{k})|v_{k}|^{2}dx}{\int_{\mathbb{R}^n} |v_{k}|^{2}dx} \to  \frac{\int_{\mathbb{R}^n} \chi(y)|V|^{2}dx}{\int_{\mathbb{R}^n} |V|^{2}dx}=a_i \in K_{\frac{\gamma_{0}}{2}}. \\
\end{align*}
From this, $ Q_{\epsilon_{k}}(u_{k}) \in K_{\frac{\gamma_{0}}{2}} $ for $ k $ large, which is a contradiction, since by assumption $ Q_{\epsilon_{k}}(u_{k}) \not\in K_{\frac{\gamma_{0}}{2}} $.
\fim

\vspace{0.5 cm}

From now on, we will use the ensuing notation
\begin{itemize}
  \item $ \theta^{i}_{\epsilon} = \left\{u \in \mathcal{M}_{\epsilon} ; |Q_{\epsilon}(u) - a_{i} | < \gamma_{0} \right\}$,
  \item $ \partial\theta^{i}_{\epsilon} = \left\{u \in \mathcal{M}_{\epsilon} ; |Q_{k}(u) - a_{i} | = \gamma_{0} \right\}$,
  \item $ \beta^{i}_{\epsilon} = \displaystyle\inf_{u \in \theta^{i}_{\epsilon}} J_{\epsilon}(u) $
  \end{itemize}
and
\begin{itemize}
 \item  $ \tilde{\beta}^{i}_{\epsilon} = \displaystyle\inf_{u \in \partial\theta^{i}_{\epsilon}} J_{\epsilon}(u) .$
\end{itemize}

The above numbers are very important in our approach, because we will prove that there is a $(PS)$ sequence of $J_{\epsilon}$ associated with each $\theta^{i}_{\epsilon}$ for $i=1,2,...,\ell$. To this end, we need of the following technical result

\begin{lemma} \label{rho} There is $ \epsilon^{*} >0 $ such that
$$
\beta^{i}_{\epsilon}  < c_{\infty} + \tau \,\,\, \mbox{and} \,\,\, \beta^{i}_{\epsilon} < \tilde{\beta}^{i}_{\epsilon},
$$
for all $ \epsilon \in (0, \epsilon^{*}$), where $\tau=\frac{1}{2}(c_{\rho_\infty}-c_\infty)>0.$
\end{lemma}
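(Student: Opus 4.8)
The plan is to derive both inequalities from a single two-sided control of the levels near $c_\infty$: an upper bound for $\beta^i_\epsilon$ obtained from test functions concentrated at each $a_i$, and a lower bound for $\tilde\beta^i_\epsilon$ obtained from Lemma \ref{lemK}. The upper bound will in fact show $\beta^i_\epsilon \to c_\infty$ as $\epsilon \to 0$, which immediately yields $\beta^i_\epsilon < c_\infty + \tau$, while Lemma \ref{lemK} will force $\tilde\beta^i_\epsilon \geq c_\infty + \delta_0$, and combining the two gives $\beta^i_\epsilon < \tilde\beta^i_\epsilon$.

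For the upper bound, let $w \in \mathcal{M}_\infty$ be a ground state of $(P_\infty)$, so that $J_\infty(w) = c_\infty$ (its existence is guaranteed by Theorem \ref{TeoComp}). For each $i$ I set $\phi_\epsilon(x) = w(x - a_i/\epsilon)$ and take $t_\epsilon > 0$, the unique number with $t_\epsilon\phi_\epsilon \in \mathcal{M}_\epsilon$, which by $(f_4)$ also realizes $\max_{t \geq 0} J_\epsilon(t\phi_\epsilon)$. The change of variables $y = x - a_i/\epsilon$ turns the scope ball $B(0,\rho(\epsilon x))$ into $B(0,\rho(\epsilon y + a_i))$, and since $\rho(\epsilon y + a_i) \to \rho(a_i) = 1$ pointwise, with the annular part of the integrand dominated by $\int_{B(0,\rho_*)\setminus B(0,1)} \tfrac{|w(y+z)-w(y)|^2}{|z|^{n+2\alpha}}\,dz \in L^1(\mathbb{R}^n)$ (finite because $w \in H^\alpha(\mathbb{R}^n)$ and $\rho \leq \rho_*$), dominated convergence shows that the kinetic energy of $\phi_\epsilon$ in $J_\epsilon$ converges to that of $w$ in $J_\infty$; as the nonlinear term is translation invariant, $J_\epsilon(\phi_\epsilon) \to J_\infty(w) = c_\infty$, and a standard argument gives $t_\epsilon \to 1$ and $J_\epsilon(t_\epsilon\phi_\epsilon) \to c_\infty$. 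Since $Q_\epsilon$ is $0$-homogeneous and $Q_\epsilon(\phi_\epsilon) = \frac{\int \chi(\epsilon y + a_i)|w|^2\,dy}{\int |w|^2\,dy} \to \chi(a_i) = a_i$ (using $|a_i| < r_0$, hence $\chi(a_i)=a_i$), for $\epsilon$ small we have $t_\epsilon\phi_\epsilon \in \theta^i_\epsilon$. Therefore $\beta^i_\epsilon \leq J_\epsilon(t_\epsilon\phi_\epsilon) \to c_\infty$, and there is $\epsilon' > 0$ with $\beta^i_\epsilon < c_\infty + \min\{\tau, \delta_0\}$ for $\epsilon \in (0,\epsilon')$.

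For the lower bound I claim that every $u \in \partial\theta^i_\epsilon$ satisfies $Q_\epsilon(u) \notin K_{\gamma_0/2}$. Indeed, $|Q_\epsilon(u) - a_i| = \gamma_0 > \gamma_0/2$ keeps $Q_\epsilon(u)$ out of $\overline{B_{\gamma_0/2}(a_i)}$, while the disjointness $\overline{B_{\gamma_0}(a_i)} \cap \overline{B_{\gamma_0}(a_j)} = \emptyset$ forces $|a_i - a_j| > 2\gamma_0$, so for $j \neq i$, $|Q_\epsilon(u) - a_j| \geq |a_i - a_j| - |Q_\epsilon(u)-a_i| > 2\gamma_0 - \gamma_0 = \gamma_0 > \gamma_0/2$. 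Thus $Q_\epsilon(u) \notin K_{\gamma_0/2}$, and the contrapositive of Lemma \ref{lemK} gives $J_\epsilon(u) > c_\infty + \delta_0$ for $\epsilon \in (0,\epsilon_1)$; taking the infimum yields $\tilde\beta^i_\epsilon \geq c_\infty + \delta_0$. Setting $\epsilon^* = \min\{\epsilon',\epsilon_1\}$, for all $\epsilon \in (0,\epsilon^*)$ and every $i$ we conclude $\beta^i_\epsilon < c_\infty + \min\{\tau,\delta_0\} \leq c_\infty + \tau$ and $\beta^i_\epsilon < c_\infty + \delta_0 \leq \tilde\beta^i_\epsilon$.

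The \emph{main obstacle} is the energy convergence $J_\epsilon(t_\epsilon\phi_\epsilon) \to c_\infty$. Because $\inf\rho = 1$, one has $\rho(\epsilon\,\cdot\,) \geq 1$ and hence $J_\epsilon \geq J_\infty$ pointwise, so the nontrivial direction is to show that the \emph{extra} nonlocal energy contributed by the enlarged scope is negligible; this is precisely where the continuity of $\rho$ at $a_i$ (making the annulus $B(0,\rho(\epsilon y + a_i))\setminus B(0,1)$ shrink) and the $H^\alpha$-integrability of $w$ (providing the dominating function) must be combined. The remaining ingredients — the Nehari projection limit $t_\epsilon \to 1$ and the barycenter limit $Q_\epsilon(\phi_\epsilon)\to a_i$ — are routine and follow from the same convergences.
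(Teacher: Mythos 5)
Your proposal is correct and follows essentially the same route as the paper: the upper bound on $\beta^{i}_{\epsilon}$ comes from the Nehari projection of the translated ground state $U(\cdot-a_i/\epsilon)$ together with the barycenter limit $Q_\epsilon \to a_i$, and the lower bound on $\tilde{\beta}^{i}_{\epsilon}$ comes from the contrapositive of Lemma \ref{lemK} applied on $\partial\theta^{i}_{\epsilon}$. Your write-up is in fact slightly more explicit than the paper's in two spots (the dominated-convergence justification of the energy limit and the verification that $Q_\epsilon(u)\notin K_{\gamma_0/2}$ for \emph{all} indices $j$, not just $j=i$), but these are details, not a different argument.
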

\noindent \textbf{Proof.} From now on,  $ U \in H^{\alpha}_\rho(\mathbb{R}^{n}) $ is a ground state solution for $J_\infty$, that is,
\[
J_{\infty}(U) = c_{\infty} \quad \mbox{ and } \quad  J'_{\infty}(U) = 0 \,\,\, \mbox{( See Theorem \ref{TeoComp} )}.
\]
For $ 1 \leq i \leq \ell$, we define the function $ \widehat{U}^{i}_{\epsilon} : \mathbb{R}^{N} \to \mathbb{R}$ by
\[
\widehat{U}^{i}_{\epsilon}(x) = U( x - \frac{a_{i}}{\epsilon}).
\]

\begin{claim}\label{ltda-Jl-coo} For all $i \in \{1,...,\ell \}$, we have that
\[
\limsup_{k \to +\infty}(\sup_{t \geq 0 } J_{\epsilon}(t\widehat{U}^{i}_{\epsilon})) \leq c_{\infty}.
\]
\end{claim}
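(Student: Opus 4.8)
The plan is to reduce the computation to a small perturbation of the limiting functional $J_\infty$ by placing the translation entirely on the scope function. With $\widehat{U}^i_\epsilon(x)=U(x-a_i/\epsilon)$, I would carry out the change of variable $x=y+a_i/\epsilon$ in the double integral defining the kinetic part of $J_\epsilon(t\widehat{U}^i_\epsilon)$; the shift then disappears from $U$ and survives only in the radius of the ball, since $\rho_\epsilon(y+a_i/\epsilon)=\rho(\epsilon y+a_i)$. Writing $g_\epsilon(t):=J_\epsilon(t\widehat{U}^i_\epsilon)$, this gives
\[
g_\epsilon(t)=\frac{t^2}{2}\Big(A_\epsilon+\int_{\mathbb{R}^n}|U|^2\,dy\Big)-\int_{\mathbb{R}^n}F(tU)\,dy,
\qquad
A_\epsilon=\int_{\mathbb{R}^n}\int_{B(0,\rho(\epsilon y+a_i))}\frac{|U(y+z)-U(y)|^2}{|z|^{n+2\alpha}}\,dz\,dy .
\]
Only the factor $A_\epsilon$ now depends on $\epsilon$, so the whole statement reduces to understanding $A_\epsilon$ as $\epsilon\to0^+$.

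The key observation is that $a_i$ is a global minimum of $\rho$ with $\rho(a_i)=1$, so by continuity $\rho(\epsilon y+a_i)\to1$ for each fixed $y$; hence $\mathbf{1}_{B(0,\rho(\epsilon y+a_i))}(z)\to\mathbf{1}_{B(0,1)}(z)$ a.e.\ (they differ only on the sphere $|z|=1$, a null set). Using that $\rho$ is bounded (it is continuous with $\rho(x)\to\rho_\infty$, so $\rho\le\rho_*:=\sup_{\mathbb{R}^n}\rho<\infty$), the integrand is dominated by $|U(y+z)-U(y)|^2|z|^{-(n+2\alpha)}\mathbf{1}_{B(0,\rho_*)}(z)$, which is integrable because $U\in H^\alpha_\rho(\mathbb{R}^n)$ and the relevant norms are equivalent. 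Dominated convergence then yields $A_\epsilon\to A_\infty$, the kinetic part of $J_\infty(U)$.

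To handle the supremum I would first confine the maximiser. Since $A_\epsilon$ is bounded for small $\epsilon$ and $\int F(tU)/t^2\to+\infty$ as $t\to\infty$ (the superlinearity built into $(f_1)$, together with $F\ge0$ from $(f_3')$), there is a threshold $T$ independent of $\epsilon$ with $g_\epsilon(t)<0=g_\epsilon(0)$ for $t>T$; as $g_\epsilon$ is positive for small $t$ by $(f_2)$, its maximum is attained at some $t_\epsilon\in(0,T]$. Taking any sequence $\epsilon_k\to0^+$ realising the $\limsup$ and extracting a subsequence with $t_{\epsilon_k}\to t_0\in[0,T]$, I would combine $A_{\epsilon_k}\to A_\infty$ with the continuity of $t\mapsto\int F(tU)$ on the bounded range of $t$ (again via $(f_1)$–$(f_2)$ and dominated convergence) to conclude
\[
\sup_{t\ge0}J_{\epsilon_k}(t\widehat{U}^i_{\epsilon_k})=g_{\epsilon_k}(t_{\epsilon_k})\longrightarrow J_\infty(t_0U).
\]

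Finally, because $(f_4)$ holds, the fibering map $t\mapsto J_\infty(tU)$ attains its unique maximum on $[0,\infty)$ at the point where $tU\in\mathcal{M}_\infty$; since $J'_\infty(U)=0$ forces $U\in\mathcal{M}_\infty$, that point is $t=1$, whence $J_\infty(t_0U)\le\max_{t\ge0}J_\infty(tU)=J_\infty(U)=c_\infty$. This gives $\limsup_{\epsilon\to0^+}\sup_{t\ge0}J_\epsilon(t\widehat{U}^i_\epsilon)\le c_\infty$, as claimed. I expect the main obstacle to be the $\epsilon$-uniform control required to pass the limit through the supremum: trapping $t_\epsilon$ in a compact interval independent of $\epsilon$, so that the convergence $A_\epsilon\to A_\infty$ and the continuity of the nonlinear term can be used simultaneously. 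Once the maximiser is so trapped, the a.e.\ convergence of the truncated balls and dominated convergence do the rest.
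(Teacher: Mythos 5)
Your proposal is correct and follows essentially the same route as the paper: change variables to move the translation onto the scope function, note that $\rho(\epsilon y+a_i)\to\rho(a_i)=1$ so the kinetic term converges to that of $J_\infty$, trap the maximiser $t_\epsilon$ in a compact interval of $(0,\infty)$, pass to a convergent subsequence, and bound the limit by $\max_{t\ge0}J_\infty(tU)=J_\infty(U)=c_\infty$. Your write-up actually supplies details the paper leaves implicit (the dominated-convergence justification and the uniform confinement of the maximiser), but the underlying argument is identical.
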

By change of variable gives
$$
\begin{array}{l}
J_{\epsilon}(t\widehat{U}^{i}_{\epsilon}) =\displaystyle \frac{t^{2}}{2}\left( \int_{\mathbb{R}^n}\int_{B(0,\rho(\epsilon x+a_i))} \frac{|U(x+z) - U(x)|^2}{|z|^{n+2\alpha}}dzdx + \int_{\mathbb{R}^n}|U(x)|^2dx \right) - \\
\mbox{}\\ \;\;\;\;\;\;\;\;\;\;\;\;\;\;\;\;\;\; \displaystyle\int_{\mathbb{R}^n}F(tU(x))dx.
\end{array}
$$
Moreover, we know that there exists $ s = s(\epsilon)  > 0 $ such that
\begin{align*}
\max_{t \geq 0} J_{\epsilon}(t\widehat{U}^{i}_{\epsilon}) = J_{\epsilon}(s\widehat{U}^{i}_{\epsilon}).
\end{align*}
By a direct computation, it follows that $ s(\epsilon) \not\to 0 $ and  $ s(\epsilon) \not\to \infty $ as $ \epsilon \to 0 $. Thus, without loss of generality, we can assume $ s(\epsilon) \to s_{0}>0$ as $ \epsilon \to 0 $. Thereby,
$$
\begin{array}{l}
\displaystyle \limsup_{\epsilon \to 0} \left( \max_{t \geq 0}J_{\epsilon}(\widehat{U}^{i}_{\epsilon})  \right) \leq \displaystyle \frac{s_0^{2}}{2}\left( \int_{\mathbb{R}^n}\int_{B(0,\rho(\epsilon x+a_i))} \frac{|U(x+z) - U(x)|^2}{|z|^{n+2\alpha}}dzdx + \int_{\mathbb{R}^n}|U(x)|^2dx \right) - \\
\mbox{}\\ \;\;\;\;\;\;\;\;\;\;\;\;\;\;\;\;\;\; \displaystyle\int_{\mathbb{R}^n}F(s_0U(x))dx.
\end{array}
$$
Consequently,
\begin{align*}
\limsup_{\epsilon \to 0}(\sup_{t \geq 0 } J_{\epsilon}(t\widehat{U}^{i}_{\epsilon})) \leq  c_{\infty} \,\,\, \,\, \mbox{for} \,\,\, i \in \{1,....,\ell\}.
\end{align*}

Since $ Q_{\epsilon}(\widehat{U}^{i}_{k}) \to a_{i} $ as $ \epsilon \to 0 $, then $ \widehat{U}^{i}_{\epsilon} \in \theta^{i}_{\epsilon}  $ for all $ \epsilon $ small  enough. On the other hand, by Claim \ref{ltda-Jl-coo}, $ J_{\epsilon} (\widehat{U}^{i}_{\epsilon}) < c_{\infty} + \frac{\delta_0}{4} $ holds also for $\epsilon$ small enough. This way, there exists $\epsilon^*>0$ such that
$$
\beta^{i}_{\epsilon} < c_{\infty} + \frac{\delta_0}{4}, \,\,\,\, \forall \epsilon \in (0, \epsilon^*).
$$
Thus, decreasing $\delta_0$ if necessary, we can assume that
$$
\beta^{i}_{\epsilon} < c_{\infty} + \tau, \,\,\,\, \forall \epsilon \in (0, \epsilon^*).
$$
In order to prove the other inequality, we observe that Lemma \ref{lemK} yields $ J_{\epsilon}(u) \geq c_{\infty} + \delta_{0} $ for all $ u \in \partial \theta^{i}_{\epsilon} $  and $\epsilon \in (0, \epsilon^*)$. Therefore,
\begin{align*}
\tilde{\beta}^{i}_{\epsilon} \geq c_{\infty} + \frac{\delta_{0}}{2}, \,\,\, \mbox{for} \,\,\, \forall \epsilon \in  (0, \epsilon^*).
\end{align*}
Thereby,
\[
\beta^{i}_{\epsilon} < \tilde{\beta}^{i}_{\epsilon},
\]
for $ \epsilon \in (0, \epsilon^*)$. \fim

\begin{lemma}\label{PSb}
For each $ 1 \leq i \leq\ell$, there exists a $(PS)_{\beta^{i}_{\epsilon}}$ sequence,   $ \left\{ u^{i}_{k} \right\} \subset \theta^{i}_{\epsilon} $ for functional $ J_{\epsilon} $.
\end{lemma}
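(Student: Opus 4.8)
The plan is to obtain $\{u^i_k\}$ as an Ekeland-type minimizing sequence for $J_\epsilon$ on the piece $\theta^i_\epsilon$ of the Nehari manifold $\mathcal{M}_\epsilon$, using in an essential way the strict gap $\beta^i_\epsilon < \tilde\beta^i_\epsilon$ supplied by Lemma \ref{rho}. First I would observe that the elements of $\mathcal{M}_\epsilon$ are uniformly bounded away from $0$ in norm (a standard consequence of $(f_1)$ and $(f_2)$), so that $\mathcal{M}_\epsilon$, and hence its closed subset $\overline{\theta^i_\epsilon}$, is a complete metric space for the distance induced by $\|\cdot\|$. Since $J_\epsilon$ is bounded from below on $\mathcal{M}_\epsilon$ by the boundedness lemma of this subsection and is lower semicontinuous, Ekeland's variational principle applied to the restriction of $J_\epsilon$ to $\overline{\theta^i_\epsilon}$ yields a sequence $\{u^i_k\} \subset \overline{\theta^i_\epsilon}$ with $J_\epsilon(u^i_k) \to \beta^i_\epsilon$ and $J_\epsilon(v) \geq J_\epsilon(u^i_k) - \tfrac{1}{k}\|v - u^i_k\|$ for every $v \in \overline{\theta^i_\epsilon}$.

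The first decisive step is to confine the sequence to the open piece $\theta^i_\epsilon$. Since $J_\epsilon(u^i_k) \to \beta^i_\epsilon$, while Lemma \ref{rho} gives $\beta^i_\epsilon < \tilde\beta^i_\epsilon = \inf_{\partial\theta^i_\epsilon} J_\epsilon$, for all large $k$ one has $J_\epsilon(u^i_k) < \tilde\beta^i_\epsilon$, so $u^i_k \notin \partial\theta^i_\epsilon$ and therefore $u^i_k \in \theta^i_\epsilon$. This is precisely why Lemma \ref{rho} was needed: it keeps the minimizing sequence from drifting to the boundary $\partial\theta^i_\epsilon$, where the one-sided nature of the set would obstruct the perturbation argument below.

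The main step, which I expect to be the real obstacle, is to upgrade this constrained almost-minimality into a genuine $(PS)_{\beta^i_\epsilon}$ condition for the free functional $J_\epsilon$. Because $\mathcal{M}_\epsilon$ is a $C^1$ manifold realized as a regular level set of $E_\epsilon(u) = J'_\epsilon(u)u$, and because each $u^i_k$ lies in the interior $\theta^i_\epsilon$, I can perturb $u^i_k$ along the tangent space $T_{u^i_k}\mathcal{M}_\epsilon$ while remaining in $\overline{\theta^i_\epsilon}$; the Ekeland inequality then forces the tangential part of $J'_\epsilon(u^i_k)$ to vanish in the limit, i.e. there are multipliers $\tau_k \in \mathbb{R}$ with $J'_\epsilon(u^i_k) - \tau_k E'_\epsilon(u^i_k) \to 0$ in the dual of $H^{\alpha}_\rho(\mathbb{R}^{n})$. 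Evaluating this relation at $u^i_k$, using $J'_\epsilon(u^i_k)u^i_k = 0$, the boundedness of $\{u^i_k\}$ (which follows from the coercivity of $J_\epsilon$ on $\mathcal{M}_\epsilon$, as in Corollary \ref{ltdalem}), and the uniform nondegeneracy $E'_\epsilon(u)u \leq -\eta_1 < 0$ on $\mathcal{M}_\epsilon$ (obtained from $(f_4)$ and Lemma \ref{lemCC} exactly as in Theorem \ref{TeoComp}), I deduce $\tau_k \to 0$ and hence $J'_\epsilon(u^i_k) \to 0$. The linchpin is this uniform sign estimate for $E'_\epsilon(u)u$: it is what makes $\tau_k \to 0$ and renders the natural constraint $\mathcal{M}_\epsilon$ compatible with free criticality, delivering the desired $(PS)_{\beta^i_\epsilon}$ sequence $\{u^i_k\} \subset \theta^i_\epsilon$.
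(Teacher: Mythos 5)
Your proposal is correct and follows essentially the route the paper intends: the paper's own proof is just the one-line remark that $\beta^i_\epsilon<\tilde\beta^i_\epsilon$ (Lemma \ref{rho}) plus a citation to \cite{Lin12}, and your Ekeland-on-$\overline{\theta^i_\epsilon}$ argument --- using the strict gap to keep the sequence off $\partial\theta^i_\epsilon$ and the uniform bound $E'_\epsilon(u)u\le-\eta_1$ to kill the Lagrange multiplier --- is exactly the adaptation being invoked, mirroring the argument the paper itself spells out in Theorem \ref{TeoComp} for $\mathcal{M}_\infty$. No gaps.
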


\noindent \textbf{Proof.} By Lemma~\ref{rho}, we know that $ \beta^{i}_{\epsilon} < \tilde{\beta}^{i}_{\epsilon}$. Then, the lemma follows adapting the same ideas explored in \cite{Lin12}. \fim

\subsubsection{Conclusion of the proof for Class 3.}

Let $ \{ u^{i}_{k} \} \subset \theta^{i}_{\epsilon} $ be a $(PS)_{\beta^{i}_{\epsilon}} $ sequence   for functional $ J_{\epsilon} $ given by Lemma~\ref{PSb}. Since $ \beta^{i}_{\epsilon} < c_{\infty} + \tau$, by Lemma~\ref{Cond-PS} there is $ u^{i}$ such that $ u^{i}_{k} \to u^{i} $ in $ H^{\alpha}_\rho(\mathbb{R}^{n}) $. Thus,
$$
u^{i} \in \theta^{i}_{\epsilon}, \,\,\, J_{\epsilon}(u^i) = \beta^{i}_{\epsilon} \,\,\, \mbox{and} \,\,\, J'_{\epsilon}(u^i) = 0.
$$
Now, we infer that $ u^{i} \neq u^{j} $ for $ i \neq j $ as $ 1 \leq i,j \leq \ell $. To see why, it remains to observe that
$$
Q_k(u^i) \in \overline{B_{\gamma_{0}}(a_{i})} \,\,\, \mbox{and} \,\,\,\, Q_k(u^j) \in \overline{B_{\gamma_{0}}(a_{j})}.
$$
Once 
$$
\overline{B_{\gamma_{0}}(a_{i})} \cap \overline{B_{\gamma_{0}}(a_{j})} = \emptyset \,\,\, \mbox{for} \,\,\, i \not= j,
$$
it follows that $u^i \not= u^j$ for $i \not= j$. From this, $ J_{\epsilon} $ has at least $ \ell $ nontrivial critical points for all $ \epsilon \in (0, \epsilon^*) $, which proves the theorem. \fim

\section{A remark about the existence of Ground state solution}

Now we are going to show that the problem $(P_\epsilon)$ has a ground state by supposing only $(f_1)-(f_3)$ and that $\rho$ belongs to Class 1 or 2. Let
\begin{equation}\label{3Eq15}
m = \inf_{\mathcal{O}} I(u),
\end{equation}
where $\mathcal{O} = \{u\in H^{\alpha}_{\rho}(\mathbb{R}^n) \setminus \{0\}:\;\; I'(u) = 0\}$. 

Suppose that $u$ is an arbitrary critical point of $I$. By { Property \ref{nta1} - (2)}, 
\begin{equation}\label{3Eq16}
\mathcal{F}(t)\geq 0\;\;\mbox{for all}\;\;t\in \mathbb{R}.
\end{equation} 
Then
\begin{equation}\label{3Eq17}
I(u) = I(u) - \frac{1}{2}I'(u)u= \frac{1}{2}\int_{\mathbb{R}^n} \mathcal{F}(u(x))dx \geq 0
\end{equation}
which implies that $m\geq 0$. Therefore $0\leq m \leq I(v) < +\infty$. Let $\{u_k\} \subset \mathcal{O}$ be a sequence such that
$$
I(u_k) \to m\;\;\mbox{as}\;\;k\to \infty.
$$
Then, for some $\beta >0$ we have 
\begin{equation}\label{3Eq18}
\|u_k\| \geq \beta.
\end{equation}
Arguing as in the proof of Lemma \ref{3lem1}, $\{u_k\}$ is bounded in $H^{\alpha}_{\rho}(\mathbb{R}^n)$. Let $\delta$ as in (\ref{3Eq12}) associated to $\{u_k\}$. If $\delta = 0$, then
$$
\lim_{k\to \infty} \int_{\mathbb{R}^n}f(u_k(x))u_k(x)dx = 0,
$$
and hence
\begin{equation}\label{3Eq19}
\|u_k\|^2 = I'(u_k)u_k + \int_{\mathbb{R}^n}f(u_k(x))u_k(x)dx \to 0.
\end{equation}
This contradicts with (\ref{3Eq18}). Therefore $\delta >0$ and there exists a sequence $\{y_k\} \subset \mathbb{Z}^n$ such that $v_k(x) = u_k(x+y_k)$ satisfies
$$
I'(v_k) =0 \quad \mbox{and} \quad I(v_k) = I(u_k) \to m\;\;\mbox{as}\;\;k\to \infty,
$$
and $v_k$ converges weakly to some $v\neq 0$, a nonzero critical point of $I$. Furthermore, by (\ref{3Eq16}) and Fatou's Lemma we deduce
$$
\begin{aligned}
I(v) &= I(v) - \frac{1}{2}I'(v)v = \frac{1}{2}\int_{\mathbb{R}^n}\mathcal{F}(v(x))dx\\
&\leq \liminf_{k\to \infty} \frac{1}{2}\int_{\mathbb{R}^n}\mathcal{F}(v_k(x))dx\\
& = \liminf_{k\to \infty} \left( I(u_k) - \frac{1}{2} I'(u_k)u_k  \right) = m.
\end{aligned}
$$
Therefore, $v$ is a nontrivial critical point of $I$ with $I(v) = m$.

\begin{remark}\label{TMnta}
We note that, by Theorem \ref{main}, $v$ is a nontrivial solution but it is possible that $m = I(v) = 0$, because we are assuming that 
$$
\mathcal{F}(t) \geq 0\;\;\forall t\in \mathbb{R}.
$$
To ensure that  $m>0$, it suffices to assume in addition that 
$$
\mathcal{F}(t) >0,\;\;\mbox{for}\;\;t\neq 0.
$$ 
This is the case if $f$ satisfies the following condition $(f_4)$. In fact 
\begin{equation}\label{3Eq20}
2F(t) = 2\int_{0}^{t} \frac{f(s)}{s}sds < 2\int_{0}^{t} \frac{f(t)}{t}sds = f(t)t,
\end{equation}
which implies that $\mathcal{F}(t) >0$. Furthermore, under this condition we can show that the mountain pass       critical point is a ground state, namely 
$$
m = c=\inf_{u \in \mathcal{N}}I(u),
$$
where 
$$
\mathcal{N} = \{u\in H^{\alpha}_{\rho}(\mathbb{R}^n) \setminus \{0\}:\;\; I'(u)u = 0\}.
$$

In fact, by Remark \ref{mountain}, $I$ has the mountain-pass geometry, and we can introduce the following class of paths:
$$
\Gamma = \{\gamma \in C([0,1], H^{\alpha}_{\rho}(\mathbb{R}^n)):\;\;\gamma (0) = 0,\;\; I(\gamma (1))<0\}.
$$
The mountain-pass level
$$
c = \inf_{\gamma \in \Gamma} \sup_{\sigma\in[0,1]} I(\gamma (\sigma)) >0
$$ 
is therefore associated to $\Gamma$. Furthermore, by Remark \ref{mountain} and following the ideas of \cite{PFCT1}, we can show that for any $u\in H^{\alpha}_{\rho}(\mathbb{R}^n) \setminus \{0\}$, there is a unique $t_u = t(u)>0$ such that $t_u u \in \mathcal{N}$ and
$$
I(t_u u) = \max_{t\geq 0} I(tu),
$$
and we note that 
$$
m_* = \inf_{u\in H^{\alpha}_{\rho}(\mathbb{R}^n) \setminus \{0\}} \max_{t\geq 0} I(tu).
$$
where
$$
m_*=\inf_{u \in \mathcal{N}}I(u)
$$
On the other hand, given any $u\in \mathcal{N}$, we may define the path $\gamma_u(t) = t (t_u u)$, where $T(t_uu)<0$ and obtain that $\gamma_u\in \Gamma$. Thus, $c \leq m_*$.  

The other inequality follows from the fact that, for any $\gamma \in \Gamma$, there exists $t\in (0,1)$ such that $\gamma (1) \in \mathcal{N}$. To prove this fact, we note that if $I'(u)u\geq 0$, then, by (\ref{3Eq20}) we get
$$
\begin{aligned}
I(u) & = \frac{1}{2}\|u\|^{2} - \int_{\mathbb{R}^n}F(u(x))dx\\
& = I'(u)u + \frac{1}{2}\int_{\mathbb{R}^n}\mathcal{F}(u(x))dx\\
&\geq 0  
\end{aligned}
$$  
So, if we assume that $I'(\gamma (t))\gamma (t) >0$ for all $t\in (0,1]$, then $I(\gamma (t))\geq 0$ for all $t \in (0,1]$, contradicting $I(\gamma (1)) <0$. In conclusion, we have proved that
$$
m_* = c.
$$
On the other hand,
$$
m \geq m_* \quad \mbox{and} \quad c \geq m,
$$
from where it follows that
$$
m=m_*=c.
$$

\end{remark}

\end{document}